\newtheorem{theorem}{Theorem}[section]
\newtheorem{lemma}[theorem]{Lemma}
\newtheorem{corollary}[theorem]{Corollary}
\newtheorem{proposition}[theorem]{Proposition}
\newtheorem{definition}[theorem]{Definition}
\title{\textbf{ISOMORPHISMS BETWEEN COVERING-INDUCED LATTICES \\ AND CLASSICAL GEOMETRIC LATTICES}}
\author{\textbf{Elvis Cabrera and Jyrko Correa}}
\date{} 
\begin{document}

\maketitle

\section{Abstract}

Lattices induced by coverings arise naturally in matroid theory and combinatorial optimization, providing a structured framework for analyzing relationships between independent sets and closures. In this paper, we explore the structural properties of such lattices, with a particular focus on their rank structure, covering relations, and enumeration of elements per level. Leveraging these structural insights, we investigate necessary and sufficient conditions under which the lattice induced by a covering is isomorphic to classical geometric lattices, including the lattice of partitions, the lattice of subspaces of a vector space over a finite field, and the Dowling lattice. Our results provide a unified framework for comparing these combinatorial structures and contribute to the broader study of lattice theory, matroids, and their applications in combinatorics.

\section{Introduction}

   Lattices play a fundamental role in combinatorial mathematics, encoding hierarchical relationships between elements in a structured manner. Among these, geometric lattices have been extensively studied for their applications in combinatorics, topology, and algebraic geometry \citep{Bjorner1992, Oxley2006}. Notable examples include the lattice of partitions, the lattice of subspaces of a vector space over a finite field, and the Dowling lattice \citep{Dowling1973}, each of which provides a unique perspective on order-theoretic structures arising in mathematics. Understanding the isomorphisms between these lattices allows for deeper insights into their combinatorial properties and their connections to other mathematical disciplines.

In this work, we study the lattice induced by a covering, which naturally arises in matroid theory and combinatorial optimization \citep{Welsh1976, Oxley2006}. A key aspect of our study is the enumeration of elements per level, which is crucial for comparing the lattice induced by a covering with well-known lattices. We show that the number of elements per level follows an explicit combinatorial formula, allowing us to analyze whether its rank structure aligns with that of the lattice of partitions, the lattice of subspaces, or the Dowling lattice. Furthermore, we investigate the covering relation in the atoms to establish a foundation for determining isomorphisms between these lattices.

By identifying structural invariants and providing necessary and sufficient conditions for isomorphism, we aim to contribute to the broader understanding of lattice theory, matroids, and their applications in combinatorics. The results presented here offer a systematic approach to comparing different geometric lattices and provide a framework for further exploration of their combinatorial properties.

\section{Preliminaries}

In this section, we introduce key principles from matroid and lattice theory. Our focus includes the structural properties of three specific lattices: the partition lattice, the lattice of subspaces of a vector space over a finite field, and the Dowling lattice.

\subsection{The Matroid induced by a Covering}
Matroid theory generalizes ideas from linear algebra and graph theory, capturing core notions like independence, bases, and rank functions \citep{Oxley2006, Welsh1976}. Below, we formally define a matroid.

\begin{definition}[Matroid \citep{Whitney1935}]
A matroid is an ordered pair \( M = (U, \mathcal{J}) \), where \( U \) is a finite set and \( \mathcal{J} \) is a collection of subsets of \( U \) satisfying:
\begin{enumerate}
    \item[\( \mathbf{I1} \)] The empty set belongs to \( \mathcal{J} \), i.e., \( \emptyset \in \mathcal{J} \).
    \item[\( \mathbf{I2} \)] If \( I \in \mathcal{J} \) and \( I' \subseteq I \), then \( I' \in \mathcal{J} \).
    \item[\( \mathbf{I3} \)] For any \( I_1, I_2 \in \mathcal{J} \) where \( |I_1| < |I_2| \), there exists an element \( e \in I_2 \setminus I_1 \) such that \( I_1 \cup \{e\} \in \mathcal{J} \), where \( |X| \) denotes the cardinality of \( X \).
\end{enumerate}
\end{definition}

In a matroid \( M = (U, \mathcal{J}) \), the elements of \( \mathcal{J} \) are known as independent sets \citep{Oxley2006}.

A matroid is equipped with a rank function \( r_M : 2^U \to \mathbb{N} \), defined as:
$
    r_M(X) = \max\{|I| : I \subseteq X, I \in \mathcal{J}\}, \quad X \subseteq U.
$
The closure of a set \( X \) in \( M \) is given by:
$
    \mathrm{cl}_M(X) = \{a \in U : r_M(X) = r_M(X \cup \{a\})\}.
$
For convenience, we write \( \mathrm{cl}(X) \) instead of \( \mathrm{cl}_M(X) \). A set \( X \) is called a closed set of $M$ if it satisfies \( \mathrm{cl}(X) = X \) \citep{Welsh1976}.

One notable branch of matroid theory is transversal theory, which establishes connections between matroids and collections of subsets \citep{Lovasz1978}. Specifically, it introduces the concept of a transversal matroid, which is derived from a set system.

\begin{definition}[Transversal \citep{Mirsky1971}]
Let \( S \) be a finite set and \( J = \{1, 2, \dots, m\} \). Suppose \( \mathcal{F} = \{F_1, F_2, \dots, F_m\} \) is a family of subsets of \( S \). A transversal, also known as a system of distinct representatives, is a subset \( \{e_1, e_2, \dots, e_m\} \subseteq S \) such that \( e_i \in F_i \) for each \( i \in J \), and $e_i \neq e_j$ for $i\neq j$. If a subset \( X \) is a transversal for a subfamily \( \{F_i : i \in K\} \), where \( K \subseteq J \), then \( X \) is termed a partial transversal of \( \mathcal{F} \).
\end{definition}

\begin{proposition}[Transversal Matroid \citep{Mirsky1971}]
Given a family of subsets \( \mathcal{F} = \{F_i : i \in J\} \) of \( U \), the pair \( M(\mathcal{F}) = (U, \mathcal{J}(\mathcal{F})) \) defines a matroid, where \( \mathcal{J}(\mathcal{F}) \) consists of all partial transversals of \( \mathcal{F} \). This matroid is called the transversal matroid associated with \( \mathcal{F} \).
\end{proposition}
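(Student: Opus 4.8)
The plan is to recast the set-system language as bipartite matchings and then verify the three independence axioms, with the exchange axiom \( \mathbf{I3} \) being the crux. Set up the bipartite graph \( G \) on vertex classes \( U \) and \( J \), joining \( e \in U \) to \( i \in J \) exactly when \( e \in F_i \). Observe that a set \( X \subseteq U \) is a partial transversal of \( \mathcal{F} \) if and only if \( X \) is saturated by some matching of \( G \): the \( J \)-endpoints of the matching edges meeting \( X \) furnish distinct indices \( i \) together with representatives \( e_i \in F_i \), and conversely a partial transversal prescribes such a matching. Since distinct \( U \)-vertices are matched to distinct \( J \)-vertices, a matching saturating \( X \) may be taken to have exactly \( |X| \) edges. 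Hence \( \mathcal{J}(\mathcal{F}) \) is precisely the family of matchable subsets of \( U \), and axioms \( \mathbf{I1} \), \( \mathbf{I2} \) are immediate: the empty matching saturates \( \emptyset \), and deleting from a matching saturating \( X \) the edges not incident to \( X' \subseteq X \) leaves a matching saturating \( X' \).

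For \( \mathbf{I3} \), take \( I_1, I_2 \in \mathcal{J}(\mathcal{F}) \) with \( |I_1| < |I_2| \) and fix matchings \( M_1, M_2 \) of sizes \( |I_1| \), \( |I_2| \) saturating \( I_1 \), \( I_2 \). Consider \( H = M_1 \triangle M_2 \); every vertex has degree at most two, so \( H \) is a disjoint union of paths and even cycles whose edges alternate between \( M_1 \) and \( M_2 \). Each cycle and each even-length path uses equally many \( M_1 \)- and \( M_2 \)-edges, while \( H \) overall contains \( |M_2| - |M_1 \cap M_2| \) edges of \( M_2 \) against \( |M_1| - |M_1 \cap M_2| \) of \( M_1 \); since \( |M_2| > |M_1| \), some component \( P \) must contain strictly more \( M_2 \)-edges, forcing \( P \) to be an odd-length path that both begins and ends with an \( M_2 \)-edge. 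Its endpoints lie in opposite classes of the bipartition, so exactly one, call it \( e \), lies in \( U \); as \( e \) is incident in \( H \) only to an \( M_2 \)-edge, it is saturated by \( M_2 \) but not by \( M_1 \), i.e.\ \( e \in I_2 \setminus I_1 \). Then \( M_1' := M_1 \triangle E(P) \) is again a matching: internal vertices of \( P \) simply switch which incident edge is used, while the only newly saturated vertices are the two endpoints of \( P \). Thus \( M_1' \) saturates \( I_1 \cup \{e\} \), giving \( I_1 \cup \{e\} \in \mathcal{J}(\mathcal{F}) \), which is exactly \( \mathbf{I3} \).

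The step demanding the most care — the main obstacle — is the bookkeeping inside \( \mathbf{I3} \): confirming that a component of \( M_1 \triangle M_2 \) with an excess of \( M_2 \)-edges is necessarily a path oriented so that its \( U \)-endpoint is unsaturated by \( M_1 \), and that the augmenting swap \( M_1 \triangle E(P) \) not only adds \( e \) but retains \emph{all} of \( I_1 \), i.e.\ no previously matched vertex of \( I_1 \) is dropped and every internal \( U \)-vertex of \( P \) stays matched. If one prefers to avoid the symmetric-difference argument, an alternative is to invoke the defect form of Hall's marriage theorem to compute the maximum size of a partial transversal of an arbitrary subfamily and then derive \( \mathbf{I3} \) from a submodularity/counting estimate; I would nevertheless present the matching proof as the primary route, since it is short and self-contained.
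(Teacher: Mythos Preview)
Your argument is the standard augmenting-path proof and is correct: the identification of partial transversals with $U$-saturating matchings in the bipartite incidence graph is exactly right, and the symmetric-difference analysis in $\mathbf{I3}$ is carried out cleanly (in particular, your check that the $U$-endpoint $e$ of the odd path lies in $I_2\setminus I_1$ and that $M_1\triangle E(P)$ still saturates every vertex of $I_1$ is sound).

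There is nothing to compare here, however: the paper does not supply its own proof of this proposition. It is stated as a cited result from \citet{Mirsky1971} and used as a black box, so your write-up already goes beyond what the paper provides.
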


If $U$ is a set, we say $\mathcal{C} \subseteq 2^{U}$(power of $U$) if $\bigcup_{K\in C}K=U$. If $\mathcal{C}$ is a covering of $U$, we define $M(\mathcal{C})$ the transversal matroid of $\mathcal{C}$. Also, if $H \subseteq U$, let $\mathcal{C}(H):=\{C\in \mathcal{C}: C\cap H\neq \emptyset\}$ the collections of blocks of $\mathcal{C}$ that intersect $H$ non-trivially.
The following results characterize the collection of independent sets of this matroid. 
\begin{theorem}\label{matching1}\citep{Mirsky1971}
    If $\mathcal{C}$ is a covering of $U$ and $M(\mathcal{C})=(U,\mathcal{J})$ is the matroid of $\mathcal{C}$, then $H\subseteq U$ and $H\in \mathcal{J}$ if and only if $|\mathcal{C}(H')|\geq |H'|$, for all $H'\subseteq H$.
\end{theorem}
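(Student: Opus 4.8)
The plan is to recognize Theorem~\ref{matching1} as a restatement of Hall's marriage theorem applied to a suitable bipartite graph, so that the real content of the proof is setting up the correspondence correctly. First I would unwind the definition of $\mathcal{J} = \mathcal{J}(\mathcal{C})$ coming from the Transversal Matroid proposition: $H \in \mathcal{J}$ exactly when $H$ is a partial transversal of the family $\mathcal{C} = \{C_j : j \in J\}$. Writing such a partial transversal as $H = \{e_j : j \in K\}$ with $K \subseteq J$, the $e_j$ pairwise distinct, and $e_j \in C_j$, the assignment $j \mapsto e_j$ is a bijection $K \to H$ (distinctness gives injectivity, and $H = \{e_j\}$ gives surjectivity onto $H$), so its inverse is an injection $\psi : H \to J$ with $h \in C_{\psi(h)}$ for every $h \in H$; conversely any such injection exhibits $H$ as a partial transversal. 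Hence the statement to prove becomes: an injection $\psi$ of this kind exists if and only if $|\mathcal{C}(H')| \ge |H'|$ for all $H' \subseteq H$.

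Next I would form the bipartite graph $G$ with vertex classes $H$ and $\mathcal{C}$ (identifying $\mathcal{C}$ with its index set $J$, which is harmless since a covering is a set of distinct blocks), placing an edge between $h \in H$ and $C \in \mathcal{C}$ precisely when $h \in C$. An injection $\psi$ as above is literally a matching of $G$ saturating $H$, and for any $H' \subseteq H$ the neighborhood $N_G(H')$ equals $\{C \in \mathcal{C} : C \cap H' \neq \emptyset\} = \mathcal{C}(H')$ by definition of $\mathcal{C}(H')$. Applying Hall's marriage theorem on the side of $H$, a matching of $G$ saturating $H$ exists if and only if $|N_G(H')| \ge |H'|$ for every $H' \subseteq H$, i.e. if and only if $|\mathcal{C}(H')| \ge |H'|$ for every $H' \subseteq H$. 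Combined with the reformulation from the first paragraph, this is exactly the asserted equivalence.

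The argument is essentially a dictionary translation, so there is no deep obstacle; the points that need care are (i) invoking the version of Hall's theorem that saturates the ground-set side $H$ rather than the block side, and (ii) checking that ``partial transversal of $\mathcal{C}$'' corresponds to an \emph{$H$-saturating} matching, which relies on the observation that the $e_j$ are distinct and exhaust $H$, forcing $|K| = |H|$. The forward direction of the theorem does not even need Hall: if $H \in \mathcal{J}$, then every $H' \subseteq H$ lies in $\mathcal{J}$ by axiom $\mathbf{I2}$, and an $H'$-saturating matching immediately yields $|\mathcal{C}(H')| \ge |H'|$; only the converse genuinely uses Hall's theorem. If a self-contained treatment is preferred, one can replace the black-box appeal to Hall with a short induction on $|H|$ or an augmenting-path argument, but citing Hall's theorem is the cleanest route and fits the transversal-theory framing already set up in the Preliminaries.
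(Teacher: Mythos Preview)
Your proposal is correct: recognizing the statement as Hall's marriage theorem applied to the bipartite incidence graph between $H$ and the blocks of $\mathcal{C}$ is exactly the standard proof, and your translation between ``partial transversal'' and ``$H$-saturating matching'' is set up carefully. Note, however, that the paper does not supply its own proof of this theorem; it is simply quoted from \citet{Mirsky1971} as a known result, so there is nothing in the paper to compare against beyond observing that your argument is the classical one found in that reference.
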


\begin{theorem}\label{matching2}\citep{Mirsky1971}
    Let $H \in \mathcal{J}$, $g \in U$, and $g \not\in H$. Then $g \in \operatorname{cl}(H)$ if and only if there exists $H' \subseteq H$ such that $|H'| = |\mathcal{C}(H')|$ and $\mathcal{C}(\{g\}) \subseteq \mathcal{C}(H')$.
    \end{theorem}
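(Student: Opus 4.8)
The plan is to reduce the closure condition to the independence criterion of Theorem~\ref{matching1} applied to the set $H \cup \{g\}$. Since $H \in \mathcal{J}$ we have $r_M(H) = |H|$, and since $g \notin H$ the set $H \cup \{g\}$ has cardinality $|H|+1$; by monotonicity of the rank function its rank is at least $|H|$, and it equals $|H|+1$ exactly when $H \cup \{g\} \in \mathcal{J}$. Hence $g \in \operatorname{cl}(H)$ (equivalently $r_M(H \cup \{g\}) = r_M(H) = |H|$) if and only if $H \cup \{g\} \notin \mathcal{J}$. So it suffices to prove: $H \cup \{g\} \notin \mathcal{J}$ if and only if there is $H' \subseteq H$ with $|H'| = |\mathcal{C}(H')|$ and $\mathcal{C}(\{g\}) \subseteq \mathcal{C}(H')$.

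For the forward direction I would invoke Theorem~\ref{matching1}: if $H \cup \{g\} \notin \mathcal{J}$, there is a subset $K \subseteq H \cup \{g\}$ with $|\mathcal{C}(K)| < |K|$. Such a $K$ cannot be contained in $H$, for applying Theorem~\ref{matching1} to $H \in \mathcal{J}$ would give $|\mathcal{C}(K)| \geq |K|$; hence $g \in K$ and we may write $K = H' \cup \{g\}$ with $H' \subseteq H$ and $g \notin H'$. Using the identity $\mathcal{C}(H' \cup \{g\}) = \mathcal{C}(H') \cup \mathcal{C}(\{g\})$ together with $|\mathcal{C}(H')| \geq |H'|$ (Theorem~\ref{matching1} for $H$ again), the inequality $|\mathcal{C}(H') \cup \mathcal{C}(\{g\})| < |H'|+1$ forces the chain $|H'| \leq |\mathcal{C}(H')| \leq |\mathcal{C}(H') \cup \mathcal{C}(\{g\})| \leq |H'|$ to collapse to equalities. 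From $|\mathcal{C}(H')| = |H'|$ and $|\mathcal{C}(H') \cup \mathcal{C}(\{g\})| = |\mathcal{C}(H')|$ I conclude $\mathcal{C}(\{g\}) \subseteq \mathcal{C}(H')$, as desired.

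For the converse, given $H' \subseteq H$ with $|H'| = |\mathcal{C}(H')|$ and $\mathcal{C}(\{g\}) \subseteq \mathcal{C}(H')$, I set $K := H' \cup \{g\}$, a disjoint union since $g \notin H \supseteq H'$. Then $\mathcal{C}(K) = \mathcal{C}(H') \cup \mathcal{C}(\{g\}) = \mathcal{C}(H')$, so $|\mathcal{C}(K)| = |H'| < |H'|+1 = |K|$. Since $K \subseteq H \cup \{g\}$ violates the condition of Theorem~\ref{matching1}, we obtain $H \cup \{g\} \notin \mathcal{J}$, hence $g \in \operatorname{cl}(H)$.

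The argument is mostly bookkeeping once the reduction to Theorem~\ref{matching1} is set up; the points that demand a little care are the observation that a violating subset of $H \cup \{g\}$ must actually contain $g$ — so that it takes the form $H' \cup \{g\}$ — and the set identity $\mathcal{C}(H' \cup \{g\}) = \mathcal{C}(H') \cup \mathcal{C}(\{g\})$, which is precisely what makes the cardinality inequalities telescope. No idea is needed beyond combining the two cited results of Mirsky with the definition of closure.
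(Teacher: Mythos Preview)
Your proof is correct. The paper does not actually prove this statement: it is quoted as a classical result from \citep{Mirsky1971} and used without proof, so there is no argument in the paper to compare against. Your reduction of $g\in\operatorname{cl}(H)$ to $H\cup\{g\}\notin\mathcal J$ is the standard one, and the two directions then follow cleanly from Theorem~\ref{matching1} (Hall's defect condition) via the identity $\mathcal C(H'\cup\{g\})=\mathcal C(H')\cup\mathcal C(\{g\})$; the only point requiring care---that a violating subset of $H\cup\{g\}$ must contain $g$---you handle explicitly.
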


\subsection{Lattices}
Let \( (P, \leq) \) be a partially ordered set, and let \( a, b \in P \). We say that \( a \) is covered by \( b \) (or equivalently, \( b \) covers \( a \)) if \( a < b \) and no element \( c \in P \) satisfies \( a < c < b \) \citep{Birkhoff1940}.

A \textit{chain} in \( P \) from \( x_0 \) to \( x_n \) is a subset \( \{x_0, x_1, \dots, x_n\} \subseteq P \) such that \( x_0 < x_1 < \cdots < x_n \). The length of this chain is defined as \( n \), and the chain is considered maximal if each element \( x_i \) directly covers \( x_{i-1} \) for all \( i \in \{1, 2, \dots, n\} \) \citep{Stanley2012}. A poset \( P \) satisfies the \textit{Jordan-Dedekind chain condition} if, for every pair \( a, b \in P \) with \( a < b \), all maximal chains between \( a \) and \( b \) have the same length \citep{Gratzer2011}.

The \textit{height} \( h_P(y) \) of an element \( y \in P \) is the maximum length of a chain extending from the minimal element \( 0 \) to \( y \) \citep{Stanley2015}. A poset \( (L, \leq) \) qualifies as a lattice if every pair of elements \( a, b \in L \) has both a least upper bound (join \( a \vee b \)) and a greatest lower bound (meet \( a \wedge b \)).

If $0$ is the minimum element of a lattice \( L \), then any element \( a \in L \) that directly covers \( 0 \) is referred to as an \textit{atom} of \( L \) \citep{Birkhoff1940}. The atoms of \( L \) correspond precisely to the elements of height one. 
Lattices naturally partition into levels by grouping elements of equal height, where \( N(L) \) denotes the number of levels in \( L \).

\subsection{Closed-Set Lattice of a Matroid}
For a given matroid \( M \), let \( L(M) \) denote the collection of all closed sets of \( M \), ordered by inclusion. Then, \( (L(M), \subseteq) \) forms a lattice \citep{Oxley2006}. The meet and join operations within this structure are defined as:
$
X \wedge Y = X \cap Y, \quad X \vee Y = \mathrm{cl}_M(X \cup Y), \quad \forall X, Y \in L(M).
$
The smallest element of \( L(M) \) is \( \mathrm{cl}_M(\emptyset) \), while the greatest element is the ground set \( U \) of the matroid.

\subsection{Lattice Induced by a Covering}

The lattice induced by a covering, denoted \( L(\mathcal{C}) \), is the lattice of all closed sets in the transversal matroid associated with a covering \( \mathcal{C} \) of a finite set \citep{Mirsky1971, Lovasz1978}. Given a covering \( \mathcal{C} \), we define \( M(\mathcal{C}) \) as the transversal matroid generated by \( \mathcal{C} \). The elements of \( L(\mathcal{C}) \) are precisely the closed sets of \( M(\mathcal{C}) \), arranged in a hierarchy based on inclusion.

The meet of two closed sets in $L(\mathcal{C})$ is given by their intersection, representing the largest closed set contained within both. Similarly, the join operation is defined as the smallest closed set that contains their union \citep{Gratzer2011}. These operations ensure that \( L(\mathcal{C}) \) is a bounded, ranked lattice.

This lattice possesses a minimum element corresponding to the empty set and a maximum element equal to the union of all sets in \( \mathcal{C} \) \citep{Bjorner1992}. The elements of \( L(\mathcal{C}) \) can be further organized into levels according to height. Let \( L^{(\ell)}(\mathcal{C}) \) represent the set of closed sets at level \( \ell \), and let $|\operatorname{Cov}(X)|$ be the number of ways to extend  $X\in L^{(\ell)}(\mathcal{C})$ to a larger closed set at level $\ell+1$.

The lattice associated with a covering appears naturally in matroid theory, lattice theory, and combinatorial optimization \citep{Welsh1976}. In the following sections, we will investigate its structural properties and examine conditions under which it exhibits isomorphisms with other lattice structures.

\subsection{The Lattice of Partitions}

The lattice of partitions, denoted \( P_n \), consists of all partitions of an \( n \)-element set, ordered by refinement \citep{Stanley2012}. A partition of a set is a collection of nonempty, disjoint subsets, called blocks, whose union is the entire set. The refinement relation defines the partial order: given two partitions \( \pi \) and \( \sigma \), we say that \( \pi \leq \sigma \) if every block of \( \pi \) is contained within a block of \( \sigma \) \citep{Gratzer2011}.

This lattice has a minimum element, the discrete partition, where every element forms its own block, and a maximum element, the trivial partition, where all elements belong to a single block \citep{Birkhoff1940}. The elements of \( P_n \) can be grouped into levels based on their rank. We denote by \( P_n^{(\ell)} \) the set of partitions at the \( \ell \)-th level, where the rank function is given by  

$
\operatorname{rk}(\pi) = n - k,
$

with \( k \) being the number of blocks in \( \pi \) \citep{Stanley2012}.

The number of elements at each level is determined by the Stirling numbers of the second kind, denoted \( S(n, k) \), which count the ways to partition an \( n \)-element set into \( k \) blocks \citep{Comtet1974}. In particular, the first and second levels of \( P_n \) contain  

$
|P_n^{(1)}| = S(n, n-1) = \binom{n}{2}
$

$
|P_n^{(2)}| = S(n, n-2) = \frac{1}{24} n(n-1)(n-2)(3n-5).
$

These formulas will be fundamental in analyzing the structure of \( P_n \) and its potential isomorphisms with other lattices.

Each atom in \( P_n \) corresponds to a partition obtained by merging exactly two elements of the discrete partition. Thus, the number of atoms is precisely \( |P_n^{(1)}| = S(n, n-1) \) \citep{Stanley2012}. Furthermore, for each partition \( \pi \in P_n \), we define \( \operatorname{Cov}(\pi) \) as the set of partitions that cover \( \pi \), meaning those that are directly above \( \pi \) in the lattice. The number of elements covering an atom \( \pi \) is given by  

$
|\operatorname{Cov}(\pi)| = \binom{n-1}{2},
$

which corresponds to the number of ways to further merge two blocks. \citep{Comtet1974}.

The meet of two partitions is the coarsest partition that refines both, found by intersecting their blocks. The join is the finest partition coarser than both, obtained by taking the transitive closure of their union \citep{Gratzer2011}. These operations make \( P_n \) a bounded, ranked lattice.

\subsection{The Lattice of Subspaces of a Vector Space over a Finite Field}

The lattice of subspaces of an \( n \)-dimensional vector space \( V \) over a finite field \( \mathbb{F}_q \), denoted \( L(V) \), consists of all subspaces of \( V \), ordered by inclusion \citep{Stanley2012}. Each element of \( L(V) \) corresponds to a subspace of \( V \), and the partial order is given by set containment: for two subspaces \( U, W \in L(V) \), we say that \( U \leq W \) if \( U \subseteq W \) \citep{Birkhoff1940}.

This lattice has a minimum element, the zero subspace \( \{0\} \), and a maximum element, the entire space \( V \). The elements of \( L(V) \) can be grouped into levels based on their dimension. We denote by \( L^{(\ell)}(V) \) the set of subspaces of \( V \) with dimension \( \ell \), where the rank function is given by  

$
\operatorname{rk}(W) = \dim(W).
$

The number of elements at each level is determined by the Gaussian binomial coefficients, which count the number of \( \ell \)-dimensional subspaces of \( V \) \citep{Stanley2012, Kung1986}. In particular, the first and second levels of \( L(V) \) contain  

$
|L^{(1)}(V)| = \frac{q^n - 1}{q - 1}
$

$
|L^{(2)}(V)| = \frac{(q^n - 1)(q^{n-1} - 1)}{(q^2 - 1)(q-1)}.
$

These formulas will be essential in analyzing the structure of \( L(V) \) and its potential isomorphisms with other lattices \citep{Gratzer2011}.

Each atom in \( L(V) \) corresponds to a one-dimensional subspace of \( V \). Thus, the number of atoms is precisely \( |L^{(1)}(V)| \) \citep{Kung1986}. Furthermore, for each subspace \( W \in L(V) \), we define \( \operatorname{Cov}(W) \) as the set of subspaces that cover \( W \), meaning those that are directly above \( W \) in the lattice. The number of elements covering a one-dimensional subspace \( W \) is given by  

$
|\operatorname{Cov}(W)| = \frac{q^{n-1} - 1}{q - 1},
$

which corresponds to the number of ways to extend \( W \) to a two-dimensional subspace \citep{Stanley2012}.

The meet of two subspaces is their intersection, which gives the largest subspace contained in both. The join is their sum, which gives the smallest subspace containing both. These operations make \( L(V) \) a bounded, ranked lattice \citep{Birkhoff1940}.

\subsection{The Dowling Lattice}

Let \(G\) be a finite group and let \(N=\{1,\dots,n\}\).
The \emph{Dowling lattice} \(Q_n(G)\) is the geometric lattice associated to the
Dowling geometry of rank \(n\) \citep{Dowling1973}.  A convenient concrete model
is in terms of \emph{partial \(G\)-partitions} with a distinguished \emph{zero block}.

\begin{definition}[\(G\)-labeled blocks and partial \(G\)-partitions]
A \emph{\(G\)-labeled block} is a pair \((B,a)\) where \(B\subseteq N\) is nonempty
and \(a:B\to G\) is a function. Two labelings \(a,a':B\to G\) are called
\emph{equivalent} (and we write \((B,a)\sim(B,a')\)) if there exists \(g\in G\) such that
\[
a'(x)=g\,a(x)\qquad \text{for all }x\in B,
\]
i.e., they differ by \emph{left multiplication by a constant} on the block.

A \emph{partial \(G\)-partition} of \(N\) is data
\[
\Pi=\bigl(Z;\ (B_1,a_1),\dots,(B_k,a_k)\bigr)
\]
where \(Z\subseteq N\) (the \emph{zero block}), the sets \(B_1,\dots,B_k\) form a partition
of \(N\setminus Z\) into nonempty blocks, and each \(a_i:B_i\to G\) is a labeling.
Two partial \(G\)-partitions are identified if they have the same zero block and,
for each \(i\), the labeled blocks \((B_i,a_i)\) agree up to the equivalence above
(independently on each nonzero block).
An element of \(Q_n(G)\) is an equivalence class \([\Pi]\) of such data.
\end{definition}

\begin{definition}[Order and rank]
Let \(x=[(Z;\ (B_1,a_1),\dots,(B_k,a_k))]\) and
\(y=[(Z';\ (C_1,b_1),\dots,(C_r,b_r))]\).
We define \(x\le y\) if:
\begin{enumerate}
\item \(Z\subseteq Z'\); and
\item for each nonzero block \(C_j\) of \(y\), there exists a (set) partition of \(C_j\)
into blocks among \(\{B_1,\dots,B_k\}\), and for every \(B_i\subseteq C_j\) there exists
\(g_{i}\in G\) such that
\[
b_j(x)=g_i\,a_i(x)\qquad \text{for all }x\in B_i.
\]
(Equivalently, the labeling on each refined block agrees with the labeling of \(C_j\)
up to a blockwise left multiplication.)
\end{enumerate}
With this order, \(Q_n(G)\) is a lattice of rank \(n\). If \(x\) has \(k\) nonzero blocks,
its rank is
\[
\operatorname{rk}(x)=n-k.
\]
In particular, the minimum element is the discrete \(G\)-partition
\((\varnothing;\ (\{1\},1),\dots,(\{n\},1))\) (so \(k=n\)), and the maximum element is
\((N;\ )\) (all elements in the zero block, so \(k=0\)).
\end{definition}

We denote by \(Q_n^{(\ell)}(G)\) the set of elements of rank \(\ell\) in \(Q_n(G)\).
The cardinalities \(|Q_n^{(\ell)}(G)|\) are the Whitney numbers of the second kind of
\(Q_n(G)\). Writing \(m=|G|\), one has the standard formula \citep{Dowling1973}
\[
|Q_n^{(\ell)}(G)|
=
W_m(n,n-\ell)
=
\sum_{i=n-\ell}^{n}\binom{n}{i}\,m^{\,i-(n-\ell)}\,S(i,n-\ell),
\]
where \(S(i,k)\) denotes the Stirling numbers of the second kind.

For the first two levels this gives:
\[
|Q_n^{(1)}(G)| = n + \binom{n}{2}\,|G|,
\]
\[
|Q_n^{(2)}(G)|
= \binom{n}{2}
+ \frac12\,n(n-1)(n-2)\,|G|
+ \frac{1}{24}\,n(n-1)(n-2)(3n-5)\,|G|^2.
\]

The atoms (rank \(1\) elements) come in two natural types:
\begin{itemize}
\item \emph{Zero-block atoms:} move one element \(i\) into the zero block (so the zero block is \(\{i\}\) and all other elements remain singleton nonzero blocks).
\item \emph{Pair atoms:} merge two singleton blocks \(\{i\},\{j\}\) into one nonzero block \(\{i,j\}\) with a relative label in \(G\).
\end{itemize}
Hence \(|Q_n^{(1)}(G)| = n + \binom{n}{2}|G|\) as above.

\medskip
Finally, let \(x\in Q_n(G)\) have \(k\) nonzero blocks (so \(\operatorname{rk}(x)=n-k\)).
Then the number of elements that cover \(x\) depends only on \(k\) (equivalently, only on
\(\operatorname{rk}(x)\)), and is given by
\[
|\operatorname{Cov}(x)| \;=\; k \;+\; |G|\binom{k}{2}.
\]

\section{Isomorphisms Between Lattices Induced by Coverings and Other Geometric Lattices}

We begin by characterizing isomorphism classes of lattices induced by coverings. Establishing these classes simplifies their structural analysis and facilitates direct comparisons with other lattice families.

If \(\mathcal{C}\) is a covering of a finite set \(U\), define the equivalence relation $\sim$ in $U$ by $x \sim y$ if and only if $x=y$, or $\mathcal{C}(\{x\}) = \mathcal{C}(\{y\})$ and $|\mathcal{C}(\{x\})| = 1$. Then, it is natural to set $[\mathcal{C}]_{\sim} := \{K/{\sim}: K \in \mathcal{C}\}$. Note that $[\mathcal{C}]_{\sim}$ is a covering of $U/{\sim}$. The following theorem explores the structural relationships between $L(\mathcal{C}) \text{ and } L([\mathcal{C}]_{\sim})$. 

\begin{theorem}\label{simplification}
$
L(\mathcal{C}) \cong L([\mathcal{C}]_{\sim})
$
\end{theorem}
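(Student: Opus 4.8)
The plan is to exhibit an explicit order-isomorphism between $L(\mathcal{C})$ and $L([\mathcal{C}]_\sim)$ induced by the quotient map $q\colon U \to U/{\sim}$. The equivalence $\sim$ only glues together elements that lie in a common singleton block of $\mathcal{C}$ and have the same (singleton) covering family; intuitively, such elements are "parallel" in the transversal matroid $M(\mathcal{C})$ — they all lie in the closure of each other — so collapsing them should not change the closed-set lattice. Concretely, I would first record the key matroid-theoretic fact: if $x \sim y$ with $x \neq y$, then $\mathcal{C}(\{x\}) = \mathcal{C}(\{y\}) = \{C\}$ for a single block $C$, and by Theorem \ref{matching2} (with $H' = \{x\}$, noting $|\{x\}| = 1 = |\mathcal{C}(\{x\})|$) we get $y \in \mathrm{cl}(\{x\})$ and symmetrically $x \in \mathrm{cl}(\{y\})$; hence $x,y$ are parallel elements of $M(\mathcal{C})$. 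It follows that every closed set of $M(\mathcal{C})$ is a union of $\sim$-classes, i.e. $\mathrm{cl}(X)$ is $\sim$-saturated for every $X \subseteq U$.

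Next I would define the candidate map $\Phi\colon L(\mathcal{C}) \to L([\mathcal{C}]_\sim)$ by $\Phi(X) = q(X) = X/{\sim}$, and an inverse-candidate $\Psi\colon L([\mathcal{C}]_\sim) \to L(\mathcal{C})$ by $\Psi(Y) = q^{-1}(Y)$. The main work is to verify that these are well-defined (land in the respective closed-set lattices) and mutually inverse. Since every closed set of $M(\mathcal{C})$ is $\sim$-saturated, $q^{-1}(q(X)) = X$ for $X \in L(\mathcal{C})$, and $q(q^{-1}(Y)) = Y$ holds for any $Y$ because $q$ is surjective; so the only real content is the closed-set correspondence. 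For this I would establish the compatibility of the transversal-matroid structures: show that for $\sim$-saturated $X \subseteq U$, a block $C \in \mathcal{C}$ meets $X$ iff $C/{\sim} \in [\mathcal{C}]_\sim$ meets $q(X)$, and more importantly that the independence/closure conditions of Theorems \ref{matching1} and \ref{matching2} transfer. The cleanest route is probably to prove directly that $r_{M([\mathcal{C}]_\sim)}(q(X)) = r_{M(\mathcal{C})}(X)$ for $\sim$-saturated $X$: a partial transversal of $\mathcal{C}$ inside $X$ can use at most one element from each $\sim$-class (since a class of size $>1$ sits inside a single block $C$, and a transversal picks at most one element per block), so it projects injectively to a partial transversal of $[\mathcal{C}]_\sim$ inside $q(X)$; conversely a partial transversal of $[\mathcal{C}]_\sim$ lifts by choosing representatives. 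Then $X$ closed $\iff$ $r(X \cup \{a\}) > r(X)$ for all $a \notin X$; using saturation and the rank equality, this matches $q(X)$ being closed in $M([\mathcal{C}]_\sim)$.

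Once the rank equality is in hand, I would conclude: $\Phi$ and $\Psi$ are well-defined bijections between $L(\mathcal{C})$ and $L([\mathcal{C}]_\sim)$, and each is clearly order-preserving (inclusion is preserved by images and preimages), with order-preserving inverse, so $\Phi$ is a lattice isomorphism. I would also remark that $\Phi$ automatically carries meets to meets and joins to joins since an order-isomorphism of lattices is a lattice isomorphism; alternatively one can check $\Phi(X \cap X') = q(X) \cap q(X')$ directly (using saturation) and then joins follow formally.

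The step I expect to be the main obstacle is the rank/closure transfer, specifically making the correspondence between partial transversals of $\mathcal{C}$ and of $[\mathcal{C}]_\sim$ airtight. The subtlety is the interaction of the quotient with blocks that are \emph{not} singletons but happen to contain an element that is $\sim$-equivalent to something else — one must check that $q$ does not accidentally identify two distinct blocks or create spurious incidences, which is why the definition of $\sim$ insists on $|\mathcal{C}(\{x\})| = 1$ (an element in two or more blocks is only $\sim$-related to itself). Handling a block $C$ that contains two $\sim$-equivalent elements $x,y$: then $C \in \mathcal{C}(\{x\}) = \{C\}$ forces $C$ to be the \emph{unique} block containing $x$, and similarly for $y$, so in $U/{\sim}$ the images behave consistently; verifying that the "pick at most one element per class" observation for transversals is not broken when several classes are merged into one block's image is the crux, and I would spell this out carefully using Theorem \ref{matching1}.
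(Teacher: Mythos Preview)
Your proposal is correct and follows essentially the same approach as the paper: both define the isomorphism as the quotient map $X \mapsto X/{\sim}$, establish that closed sets are $\sim$-saturated (you make this explicit via the parallel-elements observation using Theorem~\ref{matching2}, whereas the paper only alludes to it when proving injectivity), and then verify well-definedness in both directions. Your plan to prove the rank transfer $r_{M([\mathcal{C}]_\sim)}(q(X)) = r_{M(\mathcal{C})}(X)$ by lifting and projecting partial transversals is in fact more careful than the paper's argument, which simply asserts the relevant rank identities without justification; the concern you flag in the last paragraph (that $q$ might identify distinct blocks) is legitimate to check and resolves exactly as you anticipate, since $K_1/{\sim} = K_2/{\sim}$ forces $K_1 = K_2$.
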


\begin{proof}

\noindent
For every \( X \in L(\mathcal{C}) \), define the function
$
f : L(\mathcal{C}) \to L([\mathcal{C}]_{\sim})
\text{      by       }
f(X) = X/{\sim}.
$
We first show that \( f \) is well-defined; that is, if \( X \in L(\mathcal{C}) \), then \( X/{\sim} \in L([\mathcal{C}]_{\sim}) \).

Since \( X \in L(\mathcal{C}) \), for every \( u \in U \), there exists \( J_u \in \mathcal{J}_{M(\mathcal{C})} \) such that
$\operatorname{rank}_{M(\mathcal{C})}(X) = \operatorname{rank}_{M(\mathcal{C})}(J_u)$
and
$\operatorname{rank}_{M(\mathcal{C})}(J_u \cup \{u\}) = \operatorname{rank}_{M(\mathcal{C})}(J_u) + 1.$

For every equivalence class \( [u]_{\sim} \in U/{\sim} \), observe that:
\begin{itemize}
    \item \( J_u/{\sim} \in \mathcal{J}_{M([\mathcal{C}]_{\sim})} \),
    \item \( J_u/{\sim} \subseteq X/{\sim} \),
    \item \( \operatorname{rank}_{M([\mathcal{C}]_{\sim})}(X/{\sim}) = \operatorname{rank}_{M([\mathcal{C}]_{\sim})}(J_u/{\sim}) \),
    \item \( \operatorname{rank}_{M([\mathcal{C}]_{\sim})}(J_u/{\sim} \cup [u]_{\sim}) = \operatorname{rank}_{M([\mathcal{C}]_{\sim})}(J_u/{\sim}) + 1 \).
\end{itemize}
Hence, \( X/{\sim} \in L([\mathcal{C}]_{\sim}) \).

The map \( f \) is order-preserving because for any \( X, Y \in L(\mathcal{C}) \) with \( X \subseteq Y \), it follows that \( X/{\sim} \subseteq Y/{\sim} \).

Moreover, \( f \) is injective. Indeed, observe that if
$x\sim y$ and $x \in X\in L(\mathcal{C})$, then $y \in X$. Thus, if \( X/{\sim} = Y/{\sim} \), then \( X = Y \).

Finally, to show that \( f \) is surjective, let \( X/{\sim} \in L([\mathcal{C}]_{\sim}) \). We must prove that \( X \in L(\mathcal{C}) \). In other words, we want to show that if \( X/{\sim} \) is a flat of \( M([\mathcal{C}]_{\sim}) \), then \( X \) is a flat of \( M(\mathcal{C}) \). This can be established using an argument analogous to the one given for well-definedness.

\end{proof}

Thus, without loss of generality, we assume throughout that all coverings satisfy that the equivalence classes given by $\sim$ are singletons, i.e. \( L^{(1)}(\mathcal{C}) = \{\{x\} : x \in U\} \). The preceding theorem ensures that this assumption does not impose any restrictions, as every covering can be transformed into an isomorphic one satisfying this condition.

    \subsection{Isomorphism with a Uniform Lattice}

In this section, we examine the structural properties of a lattice induced by a covering that is isomorphic to a lattice \(\mathcal{L}\) satisfying the condition:
\[
\text{If } x, y \in \mathcal{L} \text{ and } \operatorname{rk}(x) = \operatorname{rk}(y), \text{ then } |\operatorname{Cov}(x)| = |\operatorname{Cov}(y)|. \tag{uniformity}
\]

Let \( L(\mathcal{C}) \cong \mathcal{L} \) for some covering \( \mathcal{C} \) with \( |U| = m \). Since \( L^{(1)}(\mathcal{C}) = \{\{x\} : x \in U\} \), it follows that $
m = |L^{(1)}(\mathcal{C})| = |\mathcal{L}^{(1)}|.
$

Additionally, we have 
$
|L^{(2)}(\mathcal{C})| \leq \binom{m}{2},
$
with equality holding if and only if 
$
\operatorname{cl}(\{a,b\}) = \{a,b\}, \quad \text{for all } a, b \in U.
$
In this case, \( |\mathcal{L}^{(2)}| = \binom{m}{2} \).

We now analyze the case where \( |L^{(2)}(\mathcal{C})| < \binom{m}{2} \), which occurs precisely when there exist \( a, b, c \in U \) such that 
$
b \in \operatorname{cl}(\{a,c\}).
$
The following lemma provides necessary conditions for this scenario.

\begin{lemma}
If there exist distinct \( a, b, c \in U \) such that \( b \in \operatorname{cl}(\{a,c\}) \), then at least one of them satisfies \( |\mathcal{C}(\{x\})| = 2 \).
\end{lemma}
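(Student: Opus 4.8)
The plan is to combine the closure characterization of Theorem~\ref{matching2} with the standing normalization that every singleton is closed --- equivalently, that there are no two distinct elements $x\neq y$ with $\mathcal{C}(\{x\})=\mathcal{C}(\{y\})$ and $|\mathcal{C}(\{x\})|=1$. As a preliminary, I would record that under this normalization every $2$-subset of $U$ is independent in $M(\mathcal{C})$: since $\mathcal{C}$ is a covering, $|\mathcal{C}(\{x\})|\geq 1$ for every $x$, so by Theorem~\ref{matching1} no element has rank $0$; if $\{a,c\}$ were dependent we would then have $r_{M(\mathcal{C})}(\{a,c\})=1$, whence $c\in\operatorname{cl}(\{a\})$, contradicting that $\{a\}$ is closed. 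Thus $\{a,c\}\in\mathcal{J}$, and since $a,b,c$ are distinct we have $b\notin\{a,c\}$, so Theorem~\ref{matching2} yields some $H'\subseteq\{a,c\}$ with $|H'|=|\mathcal{C}(H')|$ and $\mathcal{C}(\{b\})\subseteq\mathcal{C}(H')$.

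Next I would eliminate the degenerate choices of $H'$. The case $H'=\emptyset$ is impossible, since $\mathcal{C}(\{b\})\neq\emptyset=\mathcal{C}(\emptyset)$. If $H'=\{a\}$ (the case $H'=\{c\}$ is symmetric), then $|\mathcal{C}(\{a\})|=1$ together with $\emptyset\neq\mathcal{C}(\{b\})\subseteq\mathcal{C}(\{a\})$ forces $\mathcal{C}(\{b\})=\mathcal{C}(\{a\})$ with $|\mathcal{C}(\{b\})|=1$; as $a\neq b$, this contradicts the normalization. Hence $H'=\{a,c\}$, which gives $|\mathcal{C}(\{a\})\cup\mathcal{C}(\{c\})|=|\mathcal{C}(\{a,c\})|=2$ and $\mathcal{C}(\{b\})\subseteq\mathcal{C}(\{a\})\cup\mathcal{C}(\{c\})$.

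Finally I would conclude by contradiction. Write $\mathcal{C}(\{a\})\cup\mathcal{C}(\{c\})=\{C_1,C_2\}$ and suppose none of $a,b,c$ satisfies $|\mathcal{C}(\{x\})|=2$. Then, being nonempty subsets of $\{C_1,C_2\}$, each of $\mathcal{C}(\{a\})$, $\mathcal{C}(\{b\})$, $\mathcal{C}(\{c\})$ is a singleton; since $\mathcal{C}(\{a\})\cup\mathcal{C}(\{c\})$ has two elements, $\mathcal{C}(\{a\})\neq\mathcal{C}(\{c\})$, so the singleton $\mathcal{C}(\{b\})$ must equal one of them, say $\mathcal{C}(\{b\})=\mathcal{C}(\{a\})$. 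But then $a\neq b$ and $|\mathcal{C}(\{a\})|=|\mathcal{C}(\{b\})|=1$, contradicting the normalization. Therefore at least one of $a,b,c$ satisfies $|\mathcal{C}(\{x\})|=2$.

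The only subtle point --- and the place where the argument really turns --- is the systematic use of the normalization hypothesis: it is precisely what forbids two distinct elements from sharing a singleton block-set, and this is what rules out $H'\in\{\emptyset,\{a\},\{c\}\}$ and closes the final case analysis. Verifying that $\{a,c\}$ is independent, so that Theorem~\ref{matching2} applies in its stated form, is a routine preliminary rather than a genuine obstacle.
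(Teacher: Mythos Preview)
Your proof is correct and follows essentially the same route as the paper: use the standing normalization together with the closure/Hall characterization to confine $\mathcal{C}(\{a\})$, $\mathcal{C}(\{b\})$, $\mathcal{C}(\{c\})$ to a two-element family, and then derive a contradiction if all three are singletons. The only difference is organizational---you work directly through Theorem~\ref{matching2} on $\{a,c\}$, whereas the paper first asserts (without detail) that no element has $|\mathcal{C}(\{x\})|\geq 3$ and then applies the Hall condition of Theorem~\ref{matching1} to the dependent set $\{a,b,c\}$; your version is slightly more self-contained.
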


\begin{proof}
It is straightforward to see that none of them satisfies \( |\mathcal{C}(\{x\})| \geq 3 \). Suppose, for contradiction, that \( |\mathcal{C}(\{a\})| = |\mathcal{C}(\{b\})| = |\mathcal{C}(\{c\})| = 1 \). If \( \{a,b,c\} \not\in \mathcal{J} \), then by Theorem 4.1, there exists a subset \( H \subseteq \{a,b,c\} \) such that \( |\mathcal{C}(H)| < |H| \). 

    Since \( \{a,b\}, \{a,c\}, \{b,c\} \in \mathcal{J} \), it follows that \( H = \{a,b,c\} \), implying that \( |\mathcal{C}(\{a,b,c\})| < 3 \). This means that two of these elements belong to the same block of \( \mathcal{C} \), which is a contradiction.
\end{proof}

In our argument, we will compare the covers of the atoms of both lattices. The following lemma provides insight into the cover structure of the atoms of \( L(\mathcal{C}) \).

\begin{lemma}
Let \( x,a,b \) be different elements of $U$ such that \( |\mathcal{C}(\{x\})| = 2 \). Then, \( \{x,a,b\} \not\in \mathcal{J} \) if and only if \( \mathcal{C}(\{a\}) \subseteq \mathcal{C}(\{x\}) \) and \( \mathcal{C}(\{b\}) \subseteq \mathcal{C}(\{x\}) \).
\end{lemma}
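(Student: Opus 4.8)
The plan is to prove both implications using the characterizations of independence and closure for transversal matroids (Theorems~\ref{matching1} and~\ref{matching2}), together with the standing assumption that all equivalence classes under $\sim$ are singletons. Throughout I write $\mathcal{C}(\{x\}) = \{C_1, C_2\}$ for the two blocks containing $x$, using $|\mathcal{C}(\{x\})| = 2$.

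For the reverse implication, suppose $\mathcal{C}(\{a\}) \subseteq \mathcal{C}(\{x\})$ and $\mathcal{C}(\{b\}) \subseteq \mathcal{C}(\{x\})$. Then every block meeting $\{x,a,b\}$ is one of $C_1, C_2$, so $|\mathcal{C}(\{x,a,b\})| \le 2 < 3 = |\{x,a,b\}|$, and hence $\{x,a,b\} \notin \mathcal{J}$ by Theorem~\ref{matching1} applied with $H' = H = \{x,a,b\}$. For the forward implication, assume $\{x,a,b\} \notin \mathcal{J}$. By Theorem~\ref{matching1} there is a subset $H \subseteq \{x,a,b\}$ with $|\mathcal{C}(H)| < |H|$. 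Since $|\mathcal{C}(H)| \ge 1$ whenever $H \ne \emptyset$, such an $H$ must have $|H| \ge 2$. The key intermediate step is to rule out $|H| = 2$: I would argue that no two distinct elements of $U$ lie in a common single block and no fewer, i.e.\ that every $2$-element subset is independent. Indeed if $H = \{u,v\}$ had $|\mathcal{C}(H)| = 1$, then $\mathcal{C}(\{u\}) = \mathcal{C}(\{v\})$ is a single block, forcing $u \sim v$ and contradicting the singleton-class assumption (this is exactly the reduction made after Theorem~\ref{simplification}). Therefore $H = \{x,a,b\}$ and $|\mathcal{C}(\{x,a,b\})| \le 2$; since $C_1, C_2 \in \mathcal{C}(\{x,a,b\})$ already, we get $\mathcal{C}(\{x,a,b\}) = \{C_1, C_2\} = \mathcal{C}(\{x\})$, which immediately yields $\mathcal{C}(\{a\}) \subseteq \mathcal{C}(\{x\})$ and $\mathcal{C}(\{b\}) \subseteq \mathcal{C}(\{x\})$.

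The main obstacle I anticipate is making the $|H| = 2$ exclusion fully airtight: one must be careful that the standing normalization (all $\sim$-classes are singletons) genuinely forbids two distinct points from sharing their unique block, and that it does not secretly exclude the configurations the lemma is about (here $x$ lies in \emph{two} blocks, so $x$ is never $\sim$-equivalent to anything, and $a, b$ each lie in a block also containing $x$, so they need not be $\sim$-equivalent to each other). Once that case is dispatched, the rest is bookkeeping with Theorem~\ref{matching1}. As a remark, the statement can alternatively be read through Theorem~\ref{matching2}: $\{x,a,b\} \notin \mathcal{J}$ with $\{x,a\},\{x,b\} \in \mathcal{J}$ means $b \in \operatorname{cl}(\{x,a\})$, and unwinding the witness $H'$ from that theorem recovers the same block-containment conclusion; I would mention this only if it shortens the exposition.
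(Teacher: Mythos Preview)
Your argument is correct. The reverse implication is identical to the paper's. For the forward implication you take a slightly different route: you work entirely through Theorem~\ref{matching1} (Hall's condition), locating the defective subset $H\subseteq\{x,a,b\}$, ruling out $|H|\le 2$ via the standing singleton--class assumption, and then reading off $\mathcal{C}(\{x,a,b\})=\mathcal{C}(\{x\})$. The paper instead observes (from the same standing assumption) that $\{x,a\}\in\mathcal{J}$, hence $b\in\operatorname{cl}(\{x,a\})$, and invokes Theorem~\ref{matching2} to obtain a witness $H'\subseteq\{x,a\}$ with $|H'|=|\mathcal{C}(H')|$ and $\mathcal{C}(\{b\})\subseteq\mathcal{C}(H')$; since $|\mathcal{C}(\{x\})|=2$ this forces $\mathcal{C}(\{x,a\})=\mathcal{C}(\{x\})$ and the conclusion follows. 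Your version is a touch more self-contained (it never leaves Hall's condition), while the paper's is marginally shorter once Theorem~\ref{matching2} is in hand; your closing remark already anticipates this alternative. Your caution about the $|H|=2$ case is well placed and your justification is exactly right: $|\mathcal{C}(\{u,v\})|=1$ forces $\mathcal{C}(\{u\})=\mathcal{C}(\{v\})$ of size $1$, hence $u\sim v$.
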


\begin{proof}
 Suppose that \( \{x,a,b\} \not\in \mathcal{J} \). By condition (1), we know that \( \{x,a\} \in \mathcal{J} \). Then, by Theorem 4.2, we have \( \mathcal{C}(\{b\}) \subseteq \mathcal{C}(\{x,a\}) \) and \( |\mathcal{C}(\{x,a\})| = 2 \). Since \( |\mathcal{C}(\{x\})| = 2 \), it follows that \( \mathcal{C}(\{a\}) \subseteq \mathcal{C}(\{x\}) \), and consequently, \( \mathcal{C}(\{b\}) \subseteq \mathcal{C}(\{x\}) \).

 Suppose that \( \mathcal{C}(\{a\}) \subseteq \mathcal{C}(\{x\}) \) and \( \mathcal{C}(\{b\}) \subseteq \mathcal{C}(\{x\}) \). Then, since \( |\{x,a,b\}| = 3 \) and \( |\mathcal{C}(\{x,a,b\})| = 2 \), Theorem~\ref{matching1} ensures that \( \{x,a,b\} \not\in \mathcal{J} \).
\end{proof}

As a direct consequence, we obtain the following corollary.

\begin{corollary}
If \( |\mathcal{C}(\{x\})| = 2 \) and \( \mathcal{C}(\{a\}) \not\subseteq \mathcal{C}(\{x\}) \), then \( \operatorname{cl}(\{x,a\}) = \{x,a\} \).
\end{corollary}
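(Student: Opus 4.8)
The plan is to derive the corollary directly from the preceding lemma by a short contrapositive argument. First I would record the easy fact that \(\{x,a\}\in\mathcal{J}\): since \(\mathcal{C}\) is a covering we have \(|\mathcal{C}(\{a\})|\ge 1\), and \(\mathcal{C}(\{x\})\subseteq\mathcal{C}(\{x,a\})\) gives \(|\mathcal{C}(\{x,a\})|\ge|\mathcal{C}(\{x\})|=2\); checking the inequality \(|\mathcal{C}(H')|\ge|H'|\) on each of the four subsets \(H'\subseteq\{x,a\}\) and invoking Theorem~\ref{matching1} yields independence of \(\{x,a\}\), so that \(r_{M(\mathcal{C})}(\{x,a\})=2\).

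Next I would argue by contradiction. Suppose \(\operatorname{cl}(\{x,a\})\neq\{x,a\}\). Then there is some \(b\in\operatorname{cl}(\{x,a\})\) with \(b\notin\{x,a\}\); in particular \(x,a,b\) are pairwise distinct. By the definition of the closure operator, \(r_{M(\mathcal{C})}(\{x,a,b\})=r_{M(\mathcal{C})}(\{x,a\})=2<3=|\{x,a,b\}|\), so \(\{x,a,b\}\notin\mathcal{J}\). Now apply the previous lemma to this triple: since \(|\mathcal{C}(\{x\})|=2\) and \(\{x,a,b\}\notin\mathcal{J}\), the lemma forces \(\mathcal{C}(\{a\})\subseteq\mathcal{C}(\{x\})\) (and also \(\mathcal{C}(\{b\})\subseteq\mathcal{C}(\{x\})\)). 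This contradicts the hypothesis \(\mathcal{C}(\{a\})\not\subseteq\mathcal{C}(\{x\})\). Hence no such \(b\) exists, and \(\operatorname{cl}(\{x,a\})=\{x,a\}\).

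There is essentially no serious obstacle here; the only points requiring a line of care are (i) confirming that \(\{x,a\}\) is independent, so that ``closure equals the set itself'' is precisely the statement that no further element can be adjoined without raising the rank, and (ii) making sure the auxiliary element \(b\) produced by the failure of closedness is genuinely distinct from both \(x\) and \(a\), which is exactly what licenses feeding it into the lemma. Everything else is a direct citation of Theorem~\ref{matching1} and the preceding lemma.
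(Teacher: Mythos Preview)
Your proof is correct. Both you and the paper argue by contradiction, supposing there is some \(b\in\operatorname{cl}(\{x,a\})\setminus\{x,a\}\), but you then invoke different tools to finish. You use the immediately preceding lemma: from \(\{x,a,b\}\notin\mathcal{J}\) and \(|\mathcal{C}(\{x\})|=2\) you read off \(\mathcal{C}(\{a\})\subseteq\mathcal{C}(\{x\})\), contradicting the hypothesis. The paper instead appeals directly to Theorem~\ref{matching2}: the existence of \(b\in\operatorname{cl}(\{x,a\})\) forces \(|\mathcal{C}(\{x,a\})|=2\), whereas \(\mathcal{C}(\{a\})\not\subseteq\mathcal{C}(\{x\})\) gives \(|\mathcal{C}(\{x,a\})|\ge 3\). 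Your route is the more natural one for something billed as a corollary of the lemma, and it has the virtue of making explicit why \(\{x,a\}\in\mathcal{J}\) (a point the paper's proof leaves implicit when it invokes Theorem~\ref{matching2}). The paper's route is marginally more self-contained, bypassing the lemma entirely, but since the lemma itself was proved via Theorem~\ref{matching2}, the two arguments are really the same idea at different levels of packaging.
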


\begin{proof}
Suppose, for contradiction, that there exists \( b \not\in \{x,a\} \) such that \( b \in \operatorname{cl}(\{x,a\}) \). By Theorem 4.2, this would imply that \( |\mathcal{C}(\{x,a\})| = 2 \). However, the condition \( \mathcal{C}(\{a\}) \not\subseteq \mathcal{C}(\{x\}) \) guarantees that \( |\mathcal{C}(\{x,a\})| > 2 \), leading to a contradiction.
\end{proof}

For \( x \in U \) satisfying \( |\mathcal{C}(\{x\})| = 2 \), define
$E(\{x\}) := \{y \in U : \mathcal{C}(\{y\}) \subseteq \mathcal{C}(\{x\})\}.$
Observe that \( \operatorname{cl}(\{a,b\}) = E(\{x\}) \) for every \( a,b \in E(\{x\}) \) and $a\neq b$. Furthermore, \( E(\{x\}) = E(\{y\}) \) if and only if \( \mathcal{C}(\{x\}) = \mathcal{C}(\{y\})\) and $|\mathcal{C}(\{x\})|=2$ .

Additionally, if \( \pi \in \mathcal{L}^{(1)} \), then \( |\operatorname{Cov}(\pi)| = |\operatorname{Cov}(\{u\})| \) for every \( u \in U \). We now proceed to compute \( |E(\{x\})| \).

    \begin{theorem}
    If \( x \) satisfies \( |\mathcal{C}(\{x\})| = 2 \), then 
    $|E(\{x\})| = |\mathcal{L}^{(1)}| - |\operatorname{Cov}(\pi)| + 1.$
    
\end{theorem}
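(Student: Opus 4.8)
The plan is to count $|\operatorname{Cov}(\{x\})|$, the number of level-$2$ closed sets of $L(\mathcal{C})$ covering the atom $\{x\}$, and then solve for $|E(\{x\})|$. Since $L(\mathcal{C})\cong\mathcal{L}$, a lattice isomorphism sends $\{x\}$ to an atom of $\mathcal{L}$ and covers to covers, and because $\mathcal{L}$ is uniform this yields $|\operatorname{Cov}(\{x\})| = |\operatorname{Cov}(\pi)|$ for $\pi\in\mathcal{L}^{(1)}$; moreover $m = |U| = |L^{(1)}(\mathcal{C})| = |\mathcal{L}^{(1)}|$. So it suffices to show $|\operatorname{Cov}(\{x\})| = m - |E(\{x\})| + 1$.

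To enumerate the covers of $\{x\}$, note first that for every $y\in U\setminus\{x\}$ the set $\{x,y\}$ is independent (by Theorem~\ref{matching1}, since $|\mathcal{C}(\{x,y\})|\geq|\mathcal{C}(\{x\})| = 2$), so $\operatorname{cl}(\{x,y\})$ is a rank-$2$ flat covering $\{x\}$, and every cover of $\{x\}$ arises this way. Now partition $U\setminus\{x\}$ into $E(\{x\})\setminus\{x\}$ and $U\setminus E(\{x\})$. Since $x\in E(\{x\})$, for $y\in E(\{x\})\setminus\{x\}$ the remark that $\operatorname{cl}(\{a,b\})=E(\{x\})$ for all distinct $a,b\in E(\{x\})$ gives $\operatorname{cl}(\{x,y\}) = E(\{x\})$, so all such $y$ contribute the single cover $E(\{x\})$. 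For $y\in U\setminus E(\{x\})$ the Corollary gives $\operatorname{cl}(\{x,y\}) = \{x,y\}$; distinct such $y$ give distinct covers, and $\{x,y\}\neq E(\{x\})$ since $y\notin E(\{x\})$. Hence, \emph{provided} $E(\{x\})\neq\{x\}$, we get $|\operatorname{Cov}(\{x\})| = 1 + (m - |E(\{x\})|)$; rearranging and substituting $m = |\mathcal{L}^{(1)}|$ and $|\operatorname{Cov}(\{x\})| = |\operatorname{Cov}(\pi)|$ gives the claimed formula.

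The main obstacle is justifying $E(\{x\})\neq\{x\}$, which the local hypotheses on $x$ alone do not guarantee (it fails, e.g., when $L(\mathcal{C})$ is Boolean). I would deduce it from the standing case assumption $|L^{(2)}(\mathcal{C})| < \binom{m}{2}$ of this subsection: since every $2$-subset of $U$ is independent, the map $\{u,v\}\mapsto\operatorname{cl}(\{u,v\})$ from $2$-subsets of $U$ onto $L^{(2)}(\mathcal{C})$ is well defined and surjective, hence not injective, so some rank-$2$ flat $F$ has at least three elements; picking an atom $\{a\}$ with $a\in F$, the distinct pairs inside $F$ through $a$ all close up to $F$, so $|\operatorname{Cov}(\{a\})|\leq m-2$, and by uniformity $|\operatorname{Cov}(\{x\})|=|\operatorname{Cov}(\{a\})|<m-1$. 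Thus not every $\operatorname{cl}(\{x,y\})$ is a $2$-element set, and by the contrapositive of the Corollary the witnessing $y$ satisfies $\mathcal{C}(\{y\})\subseteq\mathcal{C}(\{x\})$, i.e.\ $y\in E(\{x\})\setminus\{x\}$, so $|E(\{x\})|\geq 2$ and the count above applies.
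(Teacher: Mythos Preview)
Your proof is correct and follows essentially the same approach as the paper: enumerate the covers of $\{x\}$ by splitting $U\setminus\{x\}$ according to whether $\mathcal{C}(\{y\})\subseteq\mathcal{C}(\{x\})$, obtain $|\operatorname{Cov}(\{x\})| = m - |E(\{x\})| + 1$, and solve using uniformity and $m=|\mathcal{L}^{(1)}|$. Your final paragraph justifying $|E(\{x\})|\geq 2$ from the standing assumption $|L^{(2)}(\mathcal{C})|<\binom{m}{2}$ is a welcome point of rigor that the paper leaves implicit (it simply lists $E(\{x\})$ among the covers without checking it is a rank--$2$ flat).
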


\begin{proof}
    From the preceding lemma, we observe that the elements of \( L(\mathcal{C}) \) that cover \( \{x\} \) are \( E(\{x\}) \) and sets of the form \( \{x,a\} \) where \( \mathcal{C}(\{a\}) \not\subseteq \mathcal{C}(\{x\}) \). The number of such sets is \( m - |E(\{x\})| \). Thus, we obtain 
$    |\operatorname{Cov}(\{x\})| = m - |E(\{x\})| + 1.$
  Furthermore, since it is known that \( m = |\mathcal{L}^{(1)}| \) and \( |\operatorname{Cov}(\{x\})| = |\operatorname{Cov}(\pi)| \), it follows that
    $|E(\{x\})| = |\mathcal{L}^{(1)}| - |\operatorname{Cov}(\pi)| + 1.$

\end{proof}

\begin{corollary}
For all \( x, y \) such that \( |\mathcal{C}(\{x\})| = |\mathcal{C}(\{y\})| = 2 \), we have
    $
    |E(\{x\})| = |E(\{y\})| = n.
    $
\end{corollary}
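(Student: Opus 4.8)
The plan is to read the corollary off directly from the preceding theorem once the uniformity hypothesis is brought to bear. First I would record that, since $L(\mathcal{C}) \cong \mathcal{L}$ and $\mathcal{L}$ satisfies the uniformity condition, all atoms of $\mathcal{L}$ have the same number of covers, a common value I will keep writing as $|\operatorname{Cov}(\pi)|$; together with the already-noted identity $m = |\mathcal{L}^{(1)}|$, this is a quantity attached to $\mathcal{L}$ alone and in particular independent of which element of $U$ we look at.

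The main step is then just an application of the previous theorem: for every $x \in U$ with $|\mathcal{C}(\{x\})| = 2$ one has $|E(\{x\})| = |\mathcal{L}^{(1)}| - |\operatorname{Cov}(\pi)| + 1$. Because the right-hand side does not mention $x$ at all, any two elements $x, y$ with $|\mathcal{C}(\{x\})| = |\mathcal{C}(\{y\})| = 2$ yield $|E(\{x\})| = |E(\{y\})|$. Setting $n := |\mathcal{L}^{(1)}| - |\operatorname{Cov}(\pi)| + 1$ then gives the claimed equality $|E(\{x\})| = |E(\{y\})| = n$. It is worth noting that the first lemma of this subsection guarantees that, in the regime under study (namely $|L^{(2)}(\mathcal{C})| < \binom{m}{2}$), there really is at least one $x$ with $|\mathcal{C}(\{x\})| = 2$, so $n$ is a genuine invariant and not a vacuous one.

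I do not expect any real obstacle: the content of the corollary is entirely carried by the theorem it follows, and the proof amounts to observing that the theorem's formula for $|E(\{x\})|$ is constant in $x$. The only points requiring a word of care — the well-definedness of $|\operatorname{Cov}(\pi)|$ under uniformity, and the nonemptiness of the set of admissible $x$ — have both already been established earlier in the section, so the argument is essentially a substitution.
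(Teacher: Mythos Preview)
Your proposal is correct and matches the paper's intent: the corollary is stated without proof in the paper because it is an immediate consequence of the preceding theorem, exactly as you describe --- the formula $|E(\{x\})| = |\mathcal{L}^{(1)}| - |\operatorname{Cov}(\pi)| + 1$ is independent of $x$, so all such $|E(\{x\})|$ coincide and one simply names this common value $n$. Your remarks on well-definedness and nonemptiness are reasonable elaborations of what the paper leaves implicit.
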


The following two lemmas examine the cases where \( |\mathcal{C}(\{a\})| = 1 \) or \( |\mathcal{C}(\{a\})| \geq 3 \).

\begin{lemma}
    Let \( \mathcal{E} := \{E(\{x\}) : |\mathcal{C}(\{x\})|=2\} \). If \( a \in U \) satisfies \( |\mathcal{C}(\{a\})| = 1 \), then \( a \) belongs to exactly one element of \( \mathcal{E} \).
\end{lemma}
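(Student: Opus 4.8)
The plan is to count the elements covering the atom \(\{a\}\) in \(L(\mathcal{C})\) in two different ways and then compare the two counts using the uniformity hypothesis. Throughout I work, as in the rest of this subsection, in the regime \(|L^{(2)}(\mathcal{C})| < \binom{m}{2}\); thus there is a rank-\(2\) flat with at least three elements, and in particular some \(x\in U\) with \(|\mathcal{C}(\{x\})|=2\), so \(\mathcal{E}\neq\varnothing\). Write \(n=|E(\{x\})|\) for the common cardinality of the members of \(\mathcal{E}\) supplied by the corollary above. Also recall that \(M(\mathcal{C})\) has no loops (since \(\mathcal{C}\) is a covering, every \(\{u\}\) is independent), so by the normalization \(L^{(1)}(\mathcal{C})=\{\{u\}:u\in U\}\) any two distinct \(u,v\in U\) form an independent pair and \(\operatorname{cl}(\{u,v\})\) is a rank-\(2\) flat.

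First I would pin down the structure of the rank-\(2\) flats. I claim the rank-\(2\) flats of cardinality at least \(3\) are exactly the members of \(\mathcal{E}\). Indeed, if \(Z\) is such a flat, choose distinct \(u,v,w\in Z\); since \(Z\) has rank \(2\) and contains the rank-\(2\) flat \(\operatorname{cl}(\{u,v\})\), we get \(Z=\operatorname{cl}(\{u,v\})\), hence \(w\in\operatorname{cl}(\{u,v\})\setminus\{u,v\}\). By the first lemma of this subsection one of \(u,v,w\), call it \(x\), satisfies \(|\mathcal{C}(\{x\})|=2\). For every \(z\in Z\setminus\{x\}\) we have \(\operatorname{cl}(\{x,z\})=Z\), which has at least three elements, so by the corollary on \(\operatorname{cl}(\{x,\cdot\})\) necessarily \(\mathcal{C}(\{z\})\subseteq\mathcal{C}(\{x\})\); thus \(Z\subseteq E(\{x\})\). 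Conversely, picking distinct \(z,z'\in Z\setminus\{x\}\subseteq E(\{x\})\), the observation preceding this theorem gives \(E(\{x\})=\operatorname{cl}(\{z,z'\})\subseteq Z\) since \(Z\) is closed. Hence \(Z=E(\{x\})\in\mathcal{E}\), and in particular \(n=|Z|\geq 3\). Moreover, two distinct members \(E,E'\) of \(\mathcal{E}\) intersect in at most one point: if \(p\neq p'\) both belonged to \(E\cap E'\), then \(\operatorname{cl}(\{p,p'\})\) would equal both \(E\) and \(E'\) by the same observation, a contradiction.

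Now fix \(a\) with \(|\mathcal{C}(\{a\})|=1\) and set \(t=|\{E\in\mathcal{E}:a\in E\}|\); the goal is \(t=1\). The elements covering \(\{a\}\) in \(L(\mathcal{C})\) are exactly the rank-\(2\) flats containing \(a\). By the previous paragraph, \(t\) of these have cardinality \(\geq 3\) (the members of \(\mathcal{E}\) through \(a\)), while the remaining covers are the two-element flats \(\{a,y\}\); and \(\{a,y\}\) fails to be a flat precisely when \(y\) lies in one of those \(t\) large flats (then \(\operatorname{cl}(\{a,y\})=E\), of size \(n\geq 3\)). Since those \(t\) flats pairwise meet only in \(\{a\}\) and each has \(n-1\) elements besides \(a\), together they contain \(t(n-1)\) elements of \(U\setminus\{a\}\); hence the number of two-element covers of \(\{a\}\) is \((m-1)-t(n-1)\), and
\[
|\operatorname{Cov}(\{a\})| = t + (m-1) - t(n-1) = m-1-t(n-2).
\]
On the other hand, the theorem above gives \(|\operatorname{Cov}(\{x\})| = m - |E(\{x\})| + 1 = m - n + 1\) for \(x\) with \(|\mathcal{C}(\{x\})|=2\), and uniformity applied to the two atoms \(\{a\}\) and \(\{x\}\) of \(L(\mathcal{C})\cong\mathcal{L}\) forces \(|\operatorname{Cov}(\{a\})| = |\operatorname{Cov}(\{x\})|\). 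Equating the two expressions yields \(t(n-2)=n-2\), and since \(n\geq 3\) we conclude \(t=1\), i.e. \(a\) lies in exactly one member of \(\mathcal{E}\).

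I expect the main obstacle to be the structural step of the second paragraph: showing that every rank-\(2\) flat of size at least \(3\) is one of the sets \(E(\{x\})\) and that distinct such sets overlap in at most one point, since this is what makes the cover count of \(\{a\}\) split cleanly into the large flats and the leftover two-element flats. Once that is in place, the two cover counts and the appeal to uniformity are elementary. The only other point requiring care is excluding \(n=2\), which is exactly what the standing hypothesis \(|L^{(2)}(\mathcal{C})| < \binom{m}{2}\) provides, via the fact that a nontrivial rank-\(2\) flat belongs to \(\mathcal{E}\) and all members of \(\mathcal{E}\) have the common size \(n\).
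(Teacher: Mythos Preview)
Your proof is correct and follows essentially the same approach as the paper: both arguments count the rank--\(2\) flats covering \(\{a\}\) and compare with \(|\operatorname{Cov}(\{x\})|=m-n+1\) via uniformity. The paper splits into the two cases \(t\ge 2\) and \(t=0\) and derives a contradiction in each, whereas you make the underlying structural facts (that the rank--\(2\) flats of size \(\ge 3\) are exactly the \(E(\{x\})\), and that distinct such flats meet in at most one point) explicit and obtain the exact formula \(|\operatorname{Cov}(\{a\})|=m-1-t(n-2)\), from which \(t=1\) follows directly.
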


\begin{proof}
    Suppose, for contradiction, that there exist at least two elements in \( \mathcal{E} \), say \( E_1 \) and \( E_2 \), such that \( a \in E_1 \) and \( a \in E_2 \). Then, we have
    $
    |\operatorname{Cov}(\{a\})| \leq m - |E_1| - |E_2| + 3.
    $
    Since \( |\operatorname{Cov}(\{a\})| = |\operatorname{Cov}(\{x\})| = m - |E(\{x\})| + 1 \), it follows that
    $
    m - |E(\{x\})| + 1 \leq m - |E_1| - |E_2| + 3,
    $
    which is a contradiction.

    Now, suppose there exists \( a \) such that \( |\mathcal{C}(\{a\})| = 1 \) and \( a \) does not belong to any element of \( \mathcal{E} \). Then, we would have \( |\operatorname{Cov}(\{a\})| = m - 1 \), which is also a contradiction.
\end{proof}

\begin{lemma}
    There is no \( a \in U \) such that \( |\mathcal{C}(\{a\})| \geq 3 \).
\end{lemma}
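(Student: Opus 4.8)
The plan is to argue by contradiction. Suppose some $a\in U$ has $|\mathcal{C}(\{a\})|\geq 3$. I will show that the atom $\{a\}$ is covered by exactly $m-1$ closed sets --- the maximum possible for an atom of $L(\mathcal{C})$ --- whereas, because we are in the case $|L^{(2)}(\mathcal{C})|<\binom{m}{2}$ (the hypothesis under which all the preceding lemmas operate), some other atom is covered by at most $m-2$ closed sets. Since every singleton is an atom (by the standing convention $L^{(1)}(\mathcal{C})=\{\{x\}:x\in U\}$) and $\mathcal{L}$ satisfies (uniformity), all atoms of $L(\mathcal{C})\cong\mathcal{L}$ have the same number of covers, and this is the contradiction.

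The first ingredient is a description of the covers of an atom $\{x\}$. Since $\mathcal{C}$ is a covering there are no loops, and by the normalization every rank-one flat is a singleton; hence for each $y\in U\setminus\{x\}$ the flat $\operatorname{cl}(\{x,y\})$ has rank $2$ and therefore covers $\{x\}$, and conversely every cover of $\{x\}$ is a rank-$2$ flat, so it equals $\operatorname{cl}(\{x,y\})$ for any of its elements $y\neq x$. Moreover two distinct covers of $\{x\}$ meet in $\{x\}$, since their intersection is a flat (meet equals intersection in $L(\mathcal{C})$) containing $\{x\}$ and strictly inside a rank-$2$ flat, hence of rank $1$. Consequently the sets $Y\setminus\{x\}$ for $Y\in\operatorname{Cov}(\{x\})$ partition $U\setminus\{x\}$, so $|\operatorname{Cov}(\{x\})|=m-1$ precisely when $\operatorname{cl}(\{x,y\})=\{x,y\}$ for every $y\neq x$, and $|\operatorname{Cov}(\{x\})|\leq m-2$ otherwise.

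Next I would apply this to $a$ with $|\mathcal{C}(\{a\})|\geq 3$. By Theorem~\ref{matching1}, $\{a,b\}\in\mathcal{J}$ for all $b\neq a$, so Theorem~\ref{matching2} applies: a putative $g\in\operatorname{cl}(\{a,b\})\setminus\{a,b\}$ would require $H'\subseteq\{a,b\}$ with $|H'|=|\mathcal{C}(H')|$ and $\mathcal{C}(\{g\})\subseteq\mathcal{C}(H')$; but $|\mathcal{C}(\{a,b\})|\geq|\mathcal{C}(\{a\})|\geq 3$ rules out $H'\in\{\emptyset,\{a\},\{a,b\}\}$, while $H'=\{b\}$ forces $\mathcal{C}(\{g\})=\mathcal{C}(\{b\})$ with $|\mathcal{C}(\{b\})|=1$, hence $g=b$ by the normalization --- a contradiction. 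Thus $\operatorname{cl}(\{a,b\})=\{a,b\}$ for all $b\neq a$, and by the previous step $|\operatorname{Cov}(\{a\})|=m-1$. On the other hand, $|L^{(2)}(\mathcal{C})|<\binom{m}{2}$ is precisely the statement that $\operatorname{cl}(\{a^\ast,b^\ast\})\supsetneq\{a^\ast,b^\ast\}$ for some pair, so $|\operatorname{cl}(\{a^\ast,b^\ast\})|\geq 3$; taking $x=a^\ast$, the block $\operatorname{cl}(\{x,b^\ast\})\setminus\{x\}$ has size $\geq 2$, whence $|\operatorname{Cov}(\{x\})|\leq m-2$. Also $x\neq a$, since the closure of any pair through $a$ is that pair itself. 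Hence $\{a\}$ and $\{x\}$ are atoms with $|\operatorname{Cov}(\{a\})|=m-1\neq|\operatorname{Cov}(\{x\})|$, contradicting (uniformity); so no such $a$ exists.

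I expect the main obstacle to be the first step --- making precise that the covers of an atom partition the rest of the ground set. This rests essentially on the normalization: if a rank-one flat could contain two elements, then $\operatorname{cl}(\{x,y\})$ might have rank $1$ and the counting collapses, so it is worth spelling out the rank-$1$/rank-$2$ dichotomy and the meet-equals-intersection property of $L(\mathcal{C})$ carefully. The remainder is a short case check on $H'$ via Theorems~\ref{matching1} and~\ref{matching2}; one should also observe that the existence of a collapsing pair forces $m\geq 3$, so that the counts $m-1$ and $m-2$ are genuinely distinct.
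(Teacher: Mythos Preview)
Your proof is correct and follows essentially the same approach as the paper's: show that an element $a$ with $|\mathcal{C}(\{a\})|\ge 3$ forces $|\operatorname{Cov}(\{a\})|=m-1$, which under the standing case assumption $|L^{(2)}(\mathcal{C})|<\binom{m}{2}$ contradicts cover-uniformity at the atoms. The paper states this in one line, leaving the justification of $|\operatorname{Cov}(\{a\})|=m-1$ implicit; your version supplies that justification via the case check on $H'$ through Theorems~\ref{matching1} and~\ref{matching2} and the partition-of-covers observation, which is exactly the intended reasoning.
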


\begin{proof}
    If \( |\mathcal{C}(\{a\})| \geq 3 \), then \( |\operatorname{Cov}(\{a\})| = m - 1 \), which contradicts our assumptions.
\end{proof}

From the above lemmas, it follows that \( \mathcal{E} \) forms a partition of \( U \). Since each element of \( \mathcal{E} \) has cardinality \( |E(\{x\})| \), we obtain
$
m = k \cdot |E(\{x\})|
$, for some natural number $k$.
Thus, it follows directly that
$
k = \frac{|\mathcal{L}^{(1)}|}{|\mathcal{L}^{(1)}| - |\operatorname{Cov}(\pi)| + 1}.
$

The following theorem determines the number of elements in \( L^{(2)}(\mathcal{C}) \).

\begin{theorem}\label{counting}
    If $\pi$ is an atom of $\mathcal{L}$, then
    \[
    |\mathcal{L}^{(2)}| = |L^{(2)}(\mathcal{C})| = \binom{|\mathcal{L}^{(1)}|}{2} - \left(\binom{|\mathcal{L}^{(1)}| - |\operatorname{Cov}(\pi)| + 1}{2} - 1\right) \cdot \frac{|\mathcal{L}^{(1)}|}{|\mathcal{L}^{(1)}| - |\operatorname{Cov}(\pi)| + 1}.
    \]
\end{theorem}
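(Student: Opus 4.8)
The plan is to enumerate the rank-two flats of $M(\mathcal{C})$ directly, by partitioning the $\binom{m}{2}$ unordered pairs of $U$ (where $m := |U| = |\mathcal{L}^{(1)}|$) according to their closures and then invoking the structural results already proved. We continue, as in the rest of the subsection, in the case where $L(\mathcal{C})$ possesses a nontrivial rank-two flat, i.e. $|L^{(2)}(\mathcal{C})| < \binom{m}{2}$; then, by the preceding lemmas and corollaries, $\mathcal{E} = \{E(\{x\}) : |\mathcal{C}(\{x\})| = 2\}$ is a partition of $U$ into $k$ blocks, each of the common cardinality $n := |E(\{x\})| = |\mathcal{L}^{(1)}| - |\operatorname{Cov}(\pi)| + 1 \ge 3$, so that $k = m/n = \tfrac{|\mathcal{L}^{(1)}|}{|\mathcal{L}^{(1)}| - |\operatorname{Cov}(\pi)| + 1}$. (In the complementary case $|L^{(2)}(\mathcal{C})| = \binom{m}{2}$ one has $|\operatorname{Cov}(\pi)| = m-1$, hence $n = 2$, and the displayed formula collapses to $\binom{m}{2}$.)

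First I would dispose of the within-block pairs. If $a,b$ lie in the same block $E_i$ of $\mathcal{E}$, then $\operatorname{cl}(\{a,b\}) = E_i$ (recorded right after the definition of $E(\{x\})$), so all $\binom{n}{2}$ pairs inside a fixed $E_i$ share a single closure, namely the rank-two flat $E_i$; the $k$ blocks thus contribute $k$ pairwise distinct rank-two flats. Next, the cross-block pairs: I would prove that whenever $a \in E_i$, $b \in E_j$ with $i \ne j$, the set $\{a,b\}$ is itself closed of rank $2$. Granting this, each of the $\binom{m}{2} - k\binom{n}{2}$ cross-block pairs is its own rank-two flat; these are pairwise distinct and distinct from every $E_i$ (a block has $n \ge 3$ elements, a cross-pair has $2$). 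Since every rank-two flat $F$ equals $\operatorname{cl}(\{a,b\})$ for some pair $\{a,b\} \subseteq F$ — because $\operatorname{cl}(\{a,b\})$ and $F$ are then flats of the same rank $2$ with one contained in the other — and since that pair is either within a block or across, the two families just described exhaust $L^{(2)}(\mathcal{C})$.

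The step I expect to be the main obstacle is the claim that cross-block pairs are closed. When one endpoint, say $a$, satisfies $|\mathcal{C}(\{a\})| = 2$, it is quick: $E_i = E(\{a\})$, so $b \notin E(\{a\})$ means $\mathcal{C}(\{b\}) \not\subseteq \mathcal{C}(\{a\})$, and the Corollary above yields $\operatorname{cl}(\{a,b\}) = \{a,b\}$; the case $|\mathcal{C}(\{b\})| = 2$ is symmetric. The delicate case is $|\mathcal{C}(\{a\})| = |\mathcal{C}(\{b\})| = 1$. Here the single block of $a$ differs from that of $b$ (otherwise $a \sim b$, contradicting the standing reduction $L^{(1)}(\mathcal{C}) = \{\{x\}:x\in U\}$), so $|\mathcal{C}(\{a,b\})| = 2$ and $\{a,b\} \in \mathcal{J}$ by Theorem~\ref{matching1}; to conclude closedness one must exclude $c \notin \{a,b\}$ with $c \in \operatorname{cl}(\{a,b\})$. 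By Theorem~\ref{matching2} such a $c$ requires $H' \subseteq \{a,b\}$ with $|H'| = |\mathcal{C}(H')|$ and $\mathcal{C}(\{c\}) \subseteq \mathcal{C}(H')$. Now $H' = \emptyset$ is impossible since $\mathcal{C}(\{c\}) \ne \emptyset$; $H' = \{a\}$ or $\{b\}$ would force $\mathcal{C}(\{c\})$ to equal a singleton block, hence $c = a$ or $c = b$ by the reduction; and $H' = \{a,b\}$ gives $\mathcal{C}(\{c\}) \subseteq \mathcal{C}(\{a\}) \cup \mathcal{C}(\{b\})$, so either $\mathcal{C}(\{c\})$ is again one of those two singletons (forcing $c=a$ or $c=b$) or $|\mathcal{C}(\{c\})| = 2$ with $a,b \in E(\{c\})$, contradicting that $a,b$ lie in different blocks of $\mathcal{E}$. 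This settles the claim.

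Finally I would assemble the count:
\[
|L^{(2)}(\mathcal{C})| \;=\; \underbrace{k}_{\text{blocks}} \;+\; \underbrace{\binom{m}{2} - k\binom{n}{2}}_{\text{cross-pairs}} \;=\; \binom{m}{2} - \left(\binom{n}{2} - 1\right)k,
\]
and substituting $m = |\mathcal{L}^{(1)}|$, $n = |\mathcal{L}^{(1)}| - |\operatorname{Cov}(\pi)| + 1$ and $k = \tfrac{|\mathcal{L}^{(1)}|}{|\mathcal{L}^{(1)}| - |\operatorname{Cov}(\pi)| + 1}$ yields the displayed expression. The equality $|\mathcal{L}^{(2)}| = |L^{(2)}(\mathcal{C})|$ is then immediate from $L(\mathcal{C}) \cong \mathcal{L}$, since a lattice isomorphism of ranked lattices preserves rank and hence the cardinality of each level.
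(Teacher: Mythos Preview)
Your proof is correct and follows essentially the same approach as the paper's: both partition the $\binom{m}{2}$ pairs into within-block pairs (all sharing the closure $E_i$) and cross-block pairs (each its own closure), then count. Your argument is in fact more detailed than the paper's, which simply asserts the cross-block closedness as ``straightforward to see,'' whereas you carefully verify it via Theorems~\ref{matching1} and~\ref{matching2} and the standing reduction on $\sim$.
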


\begin{proof}
    It is straightforward to see that if \( a, b \) do not belong to the same element of \( \mathcal{E} \), then \( \operatorname{cl}(\{a,b\}) = \{a,b\} \). Conversely, if \( a, b \) belong to the same element of \( \mathcal{E} \), say \( \hat{E} \), then \( \operatorname{cl}(\{a,b\}) = \hat{E} \).

    We begin with \( \binom{m}{2} \), which counts all subsets of \( U \) with two elements. Each of these subsets has a unique closure, except for those pairs \( \{a, b\} \) where \( a, b \) belong to the same block in \( \mathcal{E} \). 

    Since there are \( k \) blocks in \( \mathcal{E} \), each containing \( |E(\{x\})| \) elements, we subtract the redundancy introduced by these cases and add back the number of blocks in \( \mathcal{E} \). This yields the proposed formula.
\end{proof}

\subsection{Isomorphisms with the Lattice of Partitions}

In this section, we examine all possible cases of isomorphism between the lattice induced by a covering and the lattice of partitions.

\begin{theorem}\label{thm:Pn-necessary}
If \(P_n \cong L(\mathcal C)\), then \(n\le 3\).
Moreover, if \(n=3\), then \(m=3\) and \(k=1\).
\end{theorem}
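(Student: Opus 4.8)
The plan is to convert the isomorphism into a numerical identity among the low-level counts of \(P_n\) and show it has no solution with \(n\ge 4\). Since an isomorphism preserves height, and by our standing assumption \(L^{(1)}(\mathcal C)\) consists of singletons, I first record \(m=|U|=|L^{(1)}(\mathcal C)|=|P_n^{(1)}|=\binom n2\). Although \(P_n\) is not uniform in general, the part of Section~4.1 culminating in Theorem~\ref{counting} uses only that all atoms of the target lattice have the same cover-number; for \(P_n\) this common value is \(|\operatorname{Cov}(\pi)|=\binom{n-1}2\), so that machinery is available here. In particular I would compute the quantity that governs the whole analysis,
\[
|\mathcal L^{(1)}|-|\operatorname{Cov}(\pi)|+1=\binom n2-\binom{n-1}2+1=n .
\]

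Next I would distinguish the two possibilities for \(|L^{(2)}(\mathcal C)|\). If \(|L^{(2)}(\mathcal C)|=\binom m2\), then \(|P_n^{(2)}|=\binom{\binom n2}2=\tfrac18 n(n-1)(n-2)(n+1)\); comparing with \(|P_n^{(2)}|=\tfrac1{24}n(n-1)(n-2)(3n-5)\) and cancelling the common factor \(\tfrac1{24}n(n-1)(n-2)\) (valid for \(n\ge 3\)) reduces the identity to \(3(n+1)=3n-5\), which is impossible, so \(n\le 2\). If instead \(|L^{(2)}(\mathcal C)|<\binom m2\), then there are distinct \(a,b,c\in U\) with \(b\in\operatorname{cl}(\{a,c\})\) and Theorem~\ref{counting} applies; substituting \(|\mathcal L^{(1)}|=\binom n2\) and \(|\mathcal L^{(1)}|-|\operatorname{Cov}(\pi)|+1=n\) gives
\[
|P_n^{(2)}|=\binom{\binom n2}2-\Bigl(\binom n2-1\Bigr)\frac{\binom n2}{n}=\frac{(n-1)(n-2)^2(n+1)}{8}.
\]
Setting this equal to \(\tfrac1{24}n(n-1)(n-2)(3n-5)\) and cancelling \((n-1)(n-2)\) (for \(n\ge 3\)) leaves \(n(3n-5)=3(n-2)(n+1)\), i.e.\ \(2n=6\), so \(n=3\). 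In both cases \(n\le 3\).

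For the ``moreover'' part, note that when \(n=3\) the first case is excluded (it would force \(n\le 2\)), so we are in the second case; there \(m=\binom 32=3\) and \(k=\frac{|\mathcal L^{(1)}|}{|\mathcal L^{(1)}|-|\operatorname{Cov}(\pi)|+1}=\frac{\binom 32}{3}=1\). I do not anticipate a real obstacle: the work is essentially bookkeeping — verifying the hypothesis of Theorem~\ref{counting} (the existence of a collapsed pair) before invoking it, handling the ``all pairs closed'' case separately, observing that the degenerate values \(n\le 2\) are harmless (both sides of the identity vanish), and keeping the partition parameter \(n\) distinct from the common block size \(|E(\{x\})|\) that happens to coincide with it. The algebraic simplifications themselves are routine.
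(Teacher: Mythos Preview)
Your proof is correct and follows essentially the same two-case strategy as the paper (match atom counts to get $m=\binom n2$, then match rank-$2$ counts, invoking Theorem~\ref{counting} in the strict-inequality case), with somewhat cleaner algebra thanks to the early factorization $n(n-1)-2=(n-2)(n+1)$. One minor slip: $P_n$ \emph{is} uniform in the sense of Section~4.1 (an element with $k$ blocks has exactly $\binom{k}{2}$ covers), so your caveat about needing only atom-level uniformity, while a correct observation about the machinery, is unnecessary.
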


\begin{proof}
Since lattice isomorphisms preserve rank, the number of atoms (rank--\(1\) elements) must agree:
\[
|P_n^{(1)}| = |L^{(1)}(\mathcal C)|.
\]
Now \(|P_n^{(1)}| = S(n,n-1)=\binom{n}{2}\), while \(|L^{(1)}(\mathcal C)|=m\). Hence
\begin{equation}\label{eq:m-binomial}
m=\binom{n}{2}=\frac{n(n-1)}{2}.
\end{equation}

\medskip
\noindent\textbf{Case 1: \(\;|L^{(2)}(\mathcal C)|=\binom{m}{2}\).}
Using \eqref{eq:m-binomial},
\[
\binom{m}{2}=\binom{\binom{n}{2}}{2}.
\]
For \(n\ge 3\), we also have
\[
|L^{(2)}(\mathcal C)|=|P_n^{(2)}|=S(n,n-2)=\frac{1}{24}n(n-1)(n-2)(3n-5).
\]
Therefore,
\begin{equation}\label{eq:case1-eq}
\binom{\binom{n}{2}}{2}=\frac{1}{24}n(n-1)(n-2)(3n-5).
\end{equation}
Expand the left-hand side carefully:
\[
\binom{\binom{n}{2}}{2}
=\frac{1}{2}\binom{n}{2}\!\left(\binom{n}{2}-1\right)
=\frac{1}{2}\cdot \frac{n(n-1)}{2}\left(\frac{n(n-1)}{2}-1\right).
\]
Rewrite the last factor with a common denominator:
\[
\frac{n(n-1)}{2}-1=\frac{n(n-1)-2}{2}.
\]
Hence,
\[
\binom{\binom{n}{2}}{2}
=\frac{1}{2}\cdot \frac{n(n-1)}{2}\cdot \frac{n(n-1)-2}{2}
=\frac{n(n-1)\bigl(n(n-1)-2\bigr)}{8}.
\]
Substitute into \eqref{eq:case1-eq} and clear denominators by multiplying by \(24\):
\[
24\cdot \frac{n(n-1)\bigl(n(n-1)-2\bigr)}{8}
= n(n-1)(n-2)(3n-5),
\]
so
\[
3\,n(n-1)\bigl(n(n-1)-2\bigr)=n(n-1)(n-2)(3n-5).
\]
For \(n\ge 3\), we have \(n(n-1)\neq 0\), so we cancel \(n(n-1)\):
\[
3\bigl(n(n-1)-2\bigr)=(n-2)(3n-5).
\]
Expand both sides:
\[
3(n^2-n-2) = 3n^2-11n+10.
\]
The left-hand side is \(3n^2-3n-6\), hence
\[
3n^2-3n-6 = 3n^2-11n+10
\quad\Longrightarrow\quad
8n=16
\quad\Longrightarrow\quad
n=2,
\]
which contradicts \(n\ge 3\). Therefore, for \(n\ge 3\) no isomorphism can occur in Case 1.

\medskip
\noindent\textbf{Case 2: \(\;|L^{(2)}(\mathcal C)|<\binom{m}{2}\).}
By Theorem~\ref{counting} and \eqref{eq:m-binomial},
\[
|L^{(2)}(\mathcal C)|
=\binom{m}{2}-m\cdot\frac{n-1}{2}+\frac{n-1}{2}
=\binom{\binom{n}{2}}{2}-\binom{n}{2}\cdot\frac{n-1}{2}+\frac{n-1}{2}.
\]
Equating \( |L^{(2)}(\mathcal C)|=|P_n^{(2)}| \) gives
\begin{equation}\label{eq:case2-master}
\binom{\binom{n}{2}}{2}-\binom{n}{2}\cdot\frac{n-1}{2}+\frac{n-1}{2}
=\frac{1}{24}n(n-1)(n-2)(3n-5).
\end{equation}
We simplify the left-hand side term-by-term.
First,
\[
\binom{\binom{n}{2}}{2}=\frac{n(n-1)\bigl(n(n-1)-2\bigr)}{8}.
\]
Second,
\[
\binom{n}{2}\cdot\frac{n-1}{2}
=\frac{n(n-1)}{2}\cdot\frac{n-1}{2}
=\frac{n(n-1)^2}{4}.
\]
Substitute these into \eqref{eq:case2-master}:
\[
\frac{n(n-1)\bigl(n(n-1)-2\bigr)}{8}-\frac{n(n-1)^2}{4}+\frac{n-1}{2}
=\frac{1}{24}n(n-1)(n-2)(3n-5).
\]
Clear denominators by multiplying by \(24\):
\[
3n(n-1)\bigl(n(n-1)-2\bigr)-6n(n-1)^2+12(n-1)
= n(n-1)(n-2)(3n-5).
\]
Factor out \((n-1)\) on the left:
\[
(n-1)\Bigl(3n\bigl(n(n-1)-2\bigr)-6n(n-1)+12\Bigr)
= n(n-1)(n-2)(3n-5).
\]
For \(n\ge 2\), \(n-1\neq 0\), so we cancel \((n-1)\):
\[
3n\bigl(n(n-1)-2\bigr)-6n(n-1)+12
= n(n-2)(3n-5).
\]
Now expand the left-hand side:
\[
3n\bigl(n^2-n-2\bigr)-6n(n-1)+12
= (3n^3-3n^2-6n) + (-6n^2+6n) + 12
= 3n^3-9n^2+12.
\]
So the equation becomes
\[
3n^3-9n^2+12 = n(n-2)(3n-5).
\]
Expand the right-hand side:
\[
n(n-2)(3n-5)=n(3n^2-11n+10)=3n^3-11n^2+10n.
\]
Equate and cancel \(3n^3\):
\[
-9n^2+12 = -11n^2+10n
\]
\[
\Longrightarrow\quad 2n^2-10n+12=0
\]
\[
\Longrightarrow\quad n^2-5n+6=0
\]
\[
\Longrightarrow\quad (n-2)(n-3)=0.
\]

Thus \(n\leq 3\). Specifically, if $n=3$, then \eqref{eq:m-binomial} gives \(m=\binom{3}{2}=3\), and in this case the covering must have \(k=1\).
\end{proof}

\begin{lemma}\label{heights0,1}

If $\mathcal{C}$ is a covering of a set $U$, then
\begin{enumerate}
    \item $|N(L(\mathcal{C}))|=1$ if and only if $U = \emptyset$.
    \item $|N(L(\mathcal{C}))|=2$ if and only if $\mathcal{C}=\{U\}$.
\end{enumerate}
    
\end{lemma}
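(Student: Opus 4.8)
The plan is to treat the two equivalences separately. In each case the ``if'' direction will be a direct computation of \(L(\mathcal{C})\) for the prescribed covering, while the ``only if'' direction will use that \(L(\mathcal{C})\) is a geometric lattice: the number of levels \(N(L(\mathcal{C}))\) equals \(r_{M(\mathcal{C})}(U)+1\), so a constraint on the number of levels is a constraint on the matroid rank of the ground set, which we then translate back into a combinatorial constraint on \(\mathcal{C}\) via the Mirsky characterization (Theorem~\ref{matching1}). Throughout I will use that \(L(\mathcal{C})\) has least element \(\operatorname{cl}(\emptyset)\) and greatest element \(U\), and that \(\operatorname{cl}(\emptyset)=\emptyset\) for any covering, since \(a\in\operatorname{cl}(\emptyset)\) would force \(\mathcal{C}(\{a\})=\emptyset\), impossible when \(\mathcal{C}\) covers \(U\). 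Empty blocks, if one admits them, are irrelevant since they meet no singleton and change neither \(M(\mathcal{C})\) nor the statement; I will tacitly assume blocks are nonempty.

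For part (1): \(N(L(\mathcal{C}))=1\) holds exactly when \(L(\mathcal{C})\) is a one-element lattice, i.e.\ when its least and greatest elements coincide, i.e.\ when \(\emptyset=\operatorname{cl}(\emptyset)=U\). Conversely, if \(U=\emptyset\) then the only subset of \(U\) is \(\emptyset\), which is closed, so \(L(\mathcal{C})=\{\emptyset\}\) has a single level. This disposes of (1) with essentially no computation.

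For the ``if'' direction of part (2), assume \(\mathcal{C}=\{U\}\) with \(U\neq\emptyset\). Then every nonempty \(H'\subseteq U\) satisfies \(\mathcal{C}(H')=\{U\}\), so \(|\mathcal{C}(H')|=1\); by Theorem~\ref{matching1} the independent sets of \(M(\mathcal{C})\) are exactly \(\emptyset\) and the singletons, whence \(r_{M(\mathcal{C})}(U)=1\) and \(L(\mathcal{C})=\{\emptyset,U\}\), a lattice with \(2\) levels. For the ``only if'' direction, suppose \(N(L(\mathcal{C}))=2\). By part (1), \(U\neq\emptyset\), and since \(U\) is the greatest element the two heights occurring are \(0\) and \(1\), so \(r_{M(\mathcal{C})}(U)=1\). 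Hence for any distinct \(a,b\in U\) the pair \(\{a,b\}\) is dependent while \(\{a\}\) and \(\{b\}\) are independent, so the only subset of \(\{a,b\}\) that can violate the Hall-type inequality of Theorem~\ref{matching1} is \(\{a,b\}\) itself, giving \(|\mathcal{C}(\{a,b\})|<2\); together with \(\mathcal{C}(\{a\})\neq\emptyset\) this forces \(|\mathcal{C}(\{a,b\})|=1\), so \(a\) and \(b\) lie in one common block which is the unique block meeting either of them. Fixing \(a_0\in U\) and letting \(b\) range over \(U\setminus\{a_0\}\) shows that every element of \(U\) lies in the single block \(C_0\) with \(\mathcal{C}(\{a_0\})=\{C_0\}\); thus \(U\subseteq C_0\subseteq U\), so \(C_0=U\), and any further block would be nonempty and hence meet some element, contradicting uniqueness. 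The degenerate case \(|U|\le 1\) is checked directly. Therefore \(\mathcal{C}=\{U\}\).

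The only mildly delicate point — the ``main obstacle'', such as it is — is the last step of part (2): passing from the rank condition \(r_{M(\mathcal{C})}(U)=1\) to the exact identity \(\mathcal{C}=\{U\}\), which requires simultaneously ruling out multiple blocks, showing the surviving block equals \(U\), and handling the small cases \(U=\emptyset\) and \(|U|=1\). Everything else reduces to the single structural fact that in the geometric lattice \(L(\mathcal{C})\) the number of levels is one more than \(r_{M(\mathcal{C})}(U)\).
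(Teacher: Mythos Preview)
Your proof is correct and follows essentially the same strategy as the paper: reduce the number of levels to the matroid rank of \(U\), then translate that rank constraint into a combinatorial constraint on \(\mathcal{C}\). The one minor difference is in the ``only if'' direction of part~(2): the paper argues via the closure operator and Theorem~\ref{matching2} (from \(b\in\operatorname{cl}(\{a\})\) deduce \(|\mathcal{C}(\{a\})|=1\) and \(\mathcal{C}(\{b\})\subseteq\mathcal{C}(\{a\})\)), whereas you argue via dependence of \(\{a,b\}\) and Theorem~\ref{matching1} to get \(|\mathcal{C}(\{a,b\})|=1\); these are equivalent routes of comparable length.
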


\begin{proof}

    \begin{enumerate}
        \item First, suppose that $|N(L(\mathcal{C}))|=1$ (equivalently, $|L(\mathcal{C})|=1$). Thus, if $\mathfrak{m}\in L(\mathcal{C})$, then $\text{rank}_{L(\mathcal{C})}(m)=0$. This entails that if $M(U, \mathcal{J})$ is the matroid of $\mathcal{C}$, then for all $S\subseteq U$, we have $\text{rank}_M(S)=0$. Naturally, this implies $\mathcal{J}=\emptyset$, which is only possible if $U = \emptyset$.

        To prove the converse, suppose that $U = \emptyset$. This implies that $\mathcal{J}=\emptyset$, and consequently, the only closed set of $M$ is $U$. Hence, $|N(L(\mathcal{C}))|=1$
        \item First, suppose that $|N(L(\mathcal{C}))|=2$. Observe that $\mathcal{J}\neq \emptyset$, which entails that the minimum and maximum elements of $L(\mathcal{C})$ are $\emptyset$ and $U$, respectively. Thus, if $|N(L(\mathcal{C}))|=2$, then $\emptyset$ and $U$ are the only elements in $L(\mathcal{C})$. This means that if $a \in U$, then $\text{cl}_M(\{a\})=U$. Note that if $U=\{a\}$, it follows that $|N(L(\mathcal{C}))|=2$. Suppose that $b\in U\setminus\{a\}$. Since $b \in \text{cl}_M(\{a\})$, Theorem~\ref{matching2} guaranties that $|\mathcal{C}(\{a\})|=1$ and $ \mathcal{C}(\{b\})\subseteq \mathcal{C}(\{a\})$. Hence, $\mathcal{C} = \{U\}$.

        To show the converse, assume that $\mathcal{C} = \{U\}$. It follows that $\mathcal{J}= \{\{a\}\}_{a\in U}\cup\{\emptyset\}$. From the above, it is not hard to verify that $\emptyset$ and $U$ are the only elements in $L(\mathcal{C})$, which implies that $|N(L(\mathcal{C}))|=2$.
    \end{enumerate}
    
\end{proof}

\begin{theorem}\label{thm:Pn-sufficient}
    If $n=1$ or $n=2$, then the lattice of partitions is trivially isomorphic to the lattice induced by a covering. If $n=3$, $m=3$, and $k=1$, then $L(\mathcal{C})\cong P_3$.
\end{theorem}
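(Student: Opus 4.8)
The plan is to treat the three cases separately. For $n=1$ and $n=2$ the assertion is an \emph{existence} statement about extremely small lattices, so I will simply exhibit one covering in each case; for $n=3$ I must instead show that \emph{every} covering with the stated parameters produces the same five-element lattice.

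For $n=1$, I would note that $P_1$ is the one-element lattice (the unique partition of a one-point set) and take $U=\varnothing$, $\mathcal{C}=\varnothing$: the empty transversal matroid has $\varnothing$ as its only closed set, so $L(\mathcal{C})$ is a one-element lattice and $L(\mathcal{C})\cong P_1$ (alternatively by Lemma~\ref{heights0,1}(1)). For $n=2$, $P_2$ is the two-element chain; I would take $U=\{1\}$, $\mathcal{C}=\{U\}$, whose rank-$1$ transversal matroid has closed sets $\varnothing$ and $\{1\}$, giving the two-element chain $L(\mathcal{C})\cong P_2$ (alternatively by Lemma~\ref{heights0,1}(2)).

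For $n=3$ I would first recall that $P_3$ is the five-element lattice $M_3$: a bottom $\hat 0$, three pairwise incomparable atoms, and a top $\hat 1$ (consistent with $|P_3^{(0)}|=1$, $|P_3^{(1)}|=S(3,2)=3$, $|P_3^{(2)}|=S(3,1)=1$). Let $\mathcal{C}$ be a covering of $U$ with $|U|=m=3$ and $k=1$, and write $U=\{a,b,c\}$. The steps would be: (i) since $\mathcal{C}$ covers $U$ the matroid has no loops, so $\mathrm{cl}(\varnothing)=\varnothing$ and $L^{(0)}(\mathcal{C})=\{\varnothing\}$; (ii) by the standing simplification assumption $L^{(1)}(\mathcal{C})=\{\{a\},\{b\},\{c\}\}$, so there are exactly three atoms; (iii) the hypothesis $k=1$ means $\mathcal{E}=\{E(\{x\})\}$ for some $x$ with $|\mathcal{C}(\{x\})|=2$, and $m=k\cdot|E(\{x\})|$ forces $|E(\{x\})|=3$, i.e. $E(\{x\})=U$; (iv) using the observation that $\mathrm{cl}(\{p,q\})=E(\{x\})$ for all distinct $p,q\in E(\{x\})$, every two-element subset of $U$ closes to $U$, so none of $\{a,b\},\{a,c\},\{b,c\}$ is closed while $U$ is; (v) conclude that the closed sets are exactly $\varnothing,\{a\},\{b\},\{c\},U$, that $U$ covers each singleton (the only intermediate subsets being the non-closed pairs), and hence that $L(\mathcal{C})$ has the Hasse diagram of $M_3$; (vi) exhibit the bijection $\varnothing\mapsto\hat 0$, singletons $\mapsto$ atoms, $U\mapsto\hat 1$ and check it is an order isomorphism.

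I do not expect a serious obstacle. The one point that needs care is steps (iv)–(v): showing that $m=3$ and $k=1$ genuinely \emph{force} the five-element structure for \emph{every} admissible covering, rather than merely being consistent with it. This rests entirely on the implication $E(\{x\})=U\Rightarrow\mathrm{cl}(\{p,q\})=U$ for all distinct $p,q\in U$, together with the simplification assumption guaranteeing that the three singletons are genuinely distinct closed atoms. A secondary, purely bookkeeping, subtlety is the degenerate covering $\mathcal{C}=\varnothing$ of the empty set used in the case $n=1$.
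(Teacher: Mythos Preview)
Your proposal is correct and close in spirit to the paper's proof, but the execution differs in two respects worth noting. For $n=1,2$ you treat the claim as pure existence and exhibit one covering each; the paper instead invokes Lemma~\ref{heights0,1} to \emph{characterize} all coverings that work (those with $U=\varnothing$, respectively $\mathcal{C}=\{U\}$), which is more than the theorem statement demands but gives extra information. For $n=3$ the paper argues abstractly: it asserts $|N(P_3)|=|N(L(\mathcal{C}))|=3$ and then appeals to the (unstated) fact that a rank-$2$ geometric lattice is determined up to isomorphism by its number of atoms, so matching $|L^{(1)}(\mathcal{C})|=|P_3^{(1)}|=3$ finishes. Your route is more concrete: you use $k=1$, $m=3$ to force $E(\{x\})=U$, then the identity $\mathrm{cl}(\{p,q\})=E(\{x\})$ to enumerate the closed sets explicitly as $\varnothing,\{a\},\{b\},\{c\},U$. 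Your argument is longer but self-contained and makes transparent \emph{why} $k=1$ collapses the rank-$2$ level to a single point; the paper's is terser but leaves the reader to supply the rank-$2$ classification lemma.
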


\begin{proof}
     First, observe that all lattices of height  $0$ are isomorphic and all lattices of height  $1$  are isomorphic. Thus, if $n=1$, then $L(\mathcal{C})\cong P_1$ if and only if $N(L(\mathcal{C}))=1$, which by Lemma~\ref{heights0,1}, is equivalent to $U = \emptyset$. Similarly, if $n=2$, then $L(\mathcal{C})\cong P_2$ if and only if $\mathcal{C}=\{U\}$.
     
     Now, if $n=3$, $|L^{(2)}(\mathcal{C})| < \binom{m}{2}$, $m=3$, and $k=1$, then \( |N(P_n)| = |N(L(\mathcal{C}))| = 3 \). Hence, it is sufficient to show that these lattices have the same number of atoms. Indeed, since \( m=3 \), it follows that \( |L^{(1)}(\mathcal{C})|=|P_n^{(1)}|=3 \), which completes the proof.
\end{proof}

Combining Theorems~\ref{thm:Pn-necessary} and \ref{thm:Pn-sufficient}, we obtain a complete classification of when $L(\mathcal{C})$ can be isomorphic to a partition lattice $P_n$.

Next, we illustrate the isomorphism between the lattice induced by the covering \( \mathcal{C} = \{\{a,b\},\{a,c\}\} \) and \( P_3 \),  using Hasse diagrams.

\begin{center}
\begin{tabular}{@{}c@{\hspace{1.4cm}}c@{}}

\begin{minipage}{0.45\textwidth}\centering
\resizebox{\linewidth}{!}{%
\begin{tikzpicture}[every node/.style={font=\small}]
  \path[use as bounding box] (-2,-0.6) rectangle (2,4.2);

  \node at (0,3.8) {\textbf{Lattice of Partitions, \(P_3\)}};

  \node (top) at (0,3) {abc};
  \node (L)   at (-1.5,1.5) {a$|$bc};
  \node (M)   at (0,1.5) {ab$|$c};
  \node (R)   at (1.5,1.5) {ac$|$b};
  \node (bot) at (0,0) {a$|$b$|$c};

  \draw (top)--(L) (top)--(M) (top)--(R);
  \draw (L)--(bot) (M)--(bot) (R)--(bot);
\end{tikzpicture}%
}
\end{minipage}
&
\begin{minipage}{0.45\textwidth}\centering
\resizebox{\linewidth}{!}{%
\begin{tikzpicture}[every node/.style={font=\small}]
  \path[use as bounding box] (-2,-0.6) rectangle (2,4.2);

  \node at (0,3.8) {\textbf{Lattice Induced by Covering, \(L(\mathcal{C})\)}};

  \node (top) at (0,3) {$\{a,b,c\}$};
  \node (L)   at (-1.5,1.5) {$\{a\}$};
  \node (M)   at (0,1.5) {$\{b\}$};
  \node (R)   at (1.5,1.5) {$\{c\}$};
  \node (bot) at (0,0) {$\varnothing$};

  \draw (top)--(L) (top)--(M) (top)--(R);
  \draw (L)--(bot) (M)--(bot) (R)--(bot);
\end{tikzpicture}%
}
\end{minipage}

\end{tabular}
\end{center}

\subsection{Isomorphisms with subspace lattices}

In this subsection we classify all cases in which a lattice induced by a covering is isomorphic to a subspace lattice.
Let \(V\) be an \(n\)-dimensional vector space over \(\mathbb{F}_q\), and let \(L(V)\) denote the lattice of subspaces of \(V\).

The atoms of $L(V)$ are the \(1\)-dimensional subspaces and
\begin{equation}\label{eq:LV_atoms}
|L^{(1)}(V)|=\binom{n}{1}_q=\frac{q^n-1}{q-1}.
\end{equation}
Moreover, the rank--\(2\) elements are the \(2\)-dimensional subspaces, so
\begin{equation}\label{eq:LV_rank2}
|L^{(2)}(V)|=\binom{n}{2}_q
=\frac{(q^n-1)(q^{n-1}-1)}{(q^2-1)(q-1)}.
\end{equation}
Finally, \(L(V)\) is cover-uniform at the atoms: if \(W\) is a \(1\)-dimensional subspace, then the number of rank--\(2\) elements covering \(W\) equals
\begin{equation}\label{eq:LV_cov}
|\operatorname{Cov}(W)|=\binom{n-1}{1}_q=\frac{q^{n-1}-1}{q-1}.
\end{equation}

\begin{theorem}\label{thm:LV-necessary}
If \(L(\mathcal C)\cong L(V)\), then \(n\leq 2\) (the trivial case).
In the case \(n=2\), we must have
\[
m=q+1
\qquad\text{and}\qquad
k=1.
\]
\end{theorem}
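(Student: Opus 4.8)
The plan is to follow the same route as Theorem~\ref{thm:Pn-necessary}: a lattice isomorphism preserves rank, so it must match the first two Whitney numbers of $L(\mathcal C)$ and $L(V)$, and combining these equalities with the rigid arithmetic of covering-induced lattices (Theorem~\ref{counting}) will leave no room for $n\ge 3$ while pinning down the data when $n=2$. First I would use rank preservation at level~$1$: since $L^{(1)}(\mathcal C)=\{\{x\}:x\in U\}$, we have $m=|U|=|L^{(1)}(\mathcal C)|=|L^{(1)}(V)|$, so by \eqref{eq:LV_atoms},
\[
m=\frac{q^n-1}{q-1}=1+q+q^2+\cdots+q^{n-1}.
\]
If $n\le 1$, then $L(V)$ has height $\le 1$ and the conclusion $n\le 2$ is immediate (these trivial cases are governed by Lemma~\ref{heights0,1}), so from now on I assume $n\ge 2$, whence $m\ge 3$.

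Next I split according to the value of $|L^{(2)}(\mathcal C)|$. In Case~1, $|L^{(2)}(\mathcal C)|=\binom{m}{2}$. Using $m-1=\frac{q(q^{n-1}-1)}{q-1}$ one gets $\binom{m}{2}=\frac{q(q^n-1)(q^{n-1}-1)}{2(q-1)^2}$, while \eqref{eq:LV_rank2} gives $|L^{(2)}(V)|=\binom{n}{2}_q=\frac{(q^n-1)(q^{n-1}-1)}{(q-1)^2(q+1)}$. Equating these and cancelling the nonzero factor $(q^n-1)(q^{n-1}-1)$ (here we use $n\ge 2$) leaves
\[
\frac{q}{2}=\frac{1}{q+1},\qquad\text{i.e.}\qquad (q-1)(q+2)=0,
\]
which has no solution with $q\ge 2$. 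Hence Case~1 cannot occur for $n\ge 2$.

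In Case~2, $|L^{(2)}(\mathcal C)|<\binom{m}{2}$, so the structure theory preceding Theorem~\ref{counting} applies: $\mathcal E$ partitions $U$ into $k$ blocks of common size $|E(\{x\})|=m-|\operatorname{Cov}(\pi)|+1$ for an atom $\pi$, and $k=\frac{m}{m-|\operatorname{Cov}(\pi)|+1}$ is a positive integer. By \eqref{eq:LV_cov}, $|\operatorname{Cov}(\pi)|=\frac{q^{n-1}-1}{q-1}$, and a one-line computation gives $m-|\operatorname{Cov}(\pi)|+1=q^{n-1}+1$. The crux is then the estimate
\[
q^{n-1}+1\ \le\ m\ <\ 2\bigl(q^{n-1}+1\bigr),
\]
whose left inequality amounts to $q+q^2+\cdots+q^{n-2}\ge 0$ (equality iff $n\le 2$) and whose right inequality follows from $1+q+\cdots+q^{n-2}=\frac{q^{n-1}-1}{q-1}<q^{n-1}$ for $q\ge 2$. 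Since $k=\frac{m}{q^{n-1}+1}$ is an integer lying in $[1,2)$, it must equal $1$, forcing $m=q^{n-1}+1$, hence $q+q^2+\cdots+q^{n-2}=0$ and therefore $n\le 2$. Together with the trivial range, this gives $n\le 2$; and when $n=2$ we read off $m=q+1$ and $k=1$, which is the asserted conclusion.

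The main obstacle is the Case~2 step: one has to notice that a covering-induced lattice cannot realize an arbitrary number $k$ of blocks, because Theorem~\ref{counting} together with the atom-cover count pins $k$ down to the exact rational $m/(q^{n-1}+1)$, and the geometric-series shape of $m=1+q+\cdots+q^{n-1}$ squeezes this quotient strictly between $1$ and $2$. The remaining ingredients---rank matching, the Case~1 cancellation, and the trivial small-$n$ cases---are routine.
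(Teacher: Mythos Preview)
Your argument is correct and follows the same two-case template as the paper's proof: match atoms to get $m=(q^n-1)/(q-1)$, eliminate Case~1 by the same algebraic contradiction, and in Case~2 compute the block size $m-|\operatorname{Cov}(\pi)|+1=q^{n-1}+1$ and use integrality of $k$.

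The one genuine difference is how you finish Case~2. The paper argues by divisibility: from $k=\dfrac{q^n-1}{(q-1)(q^{n-1}+1)}$ it deduces $q^{n-1}+1\mid q^n-1$, then writes $q^n-1=q(q^{n-1}+1)-(q+1)$ to conclude $q^{n-1}+1\mid q+1$, which is impossible for $n\ge 3$. You instead sandwich $k=m/(q^{n-1}+1)$ strictly between $1$ and $2$ using the geometric-series expansion $m=1+q+\cdots+q^{n-1}$, forcing $k=1$ and hence $m=q^{n-1}+1$, which collapses the middle terms and gives $n=2$. Both routes are short; yours avoids the auxiliary divisibility step and reads the conclusion directly off the size of $m$, while the paper's version makes the obstruction visible as a clean divisibility condition that generalizes more readily if one were to vary the target lattice.
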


\begin{proof}
Since isomorphisms preserve levels, the number of atoms agrees. Hence, by \eqref{eq:LV_atoms},
\begin{equation}\label{eq:mLV}
m=|L^{(1)}(\mathcal C)|=|L^{(1)}(V)|=\frac{q^n-1}{q-1}.
\end{equation}

\medskip
\noindent\textbf{Case 1: \(\;|L^{(2)}(\mathcal C)|=\binom{m}{2}\).}
Because \(|L^{(2)}(\mathcal C)|=|L^{(2)}(V)|\), using \eqref{eq:mLV} and \eqref{eq:LV_rank2} we obtain
\[
\binom{\frac{q^n-1}{q-1}}{2}
=
\frac{(q^n-1)(q^{n-1}-1)}{(q^2-1)(q-1)}.
\]
Expand the binomial coefficient:
\[
\binom{\frac{q^n-1}{q-1}}{2}
=\frac12\cdot\frac{q^n-1}{q-1}\left(\frac{q^n-1}{q-1}-1\right).
\]
Compute the parenthesis:
\[
\frac{q^n-1}{q-1}-1
=\frac{q^n-1-(q-1)}{q-1}
=\frac{q^n-q}{q-1}
= q\,\frac{q^{n-1}-1}{q-1}.
\]
Therefore,
\[
\binom{\frac{q^n-1}{q-1}}{2}
=\frac12\cdot\frac{q^n-1}{q-1}\cdot q\,\frac{q^{n-1}-1}{q-1}
=\frac{q\,(q^n-1)(q^{n-1}-1)}{2\,(q-1)^2}.
\]
Substituting, we get
\[
\frac{q\,(q^n-1)(q^{n-1}-1)}{2\,(q-1)^2}
=
\frac{(q^n-1)(q^{n-1}-1)}{(q^2-1)(q-1)}.
\]
Assume \(n\ge 2\).
For \(q\ge 2\), we may cancel \((q^n-1)(q^{n-1}-1)\neq 0\), obtaining
\[
\frac{q}{2\,(q-1)^2}=\frac{1}{(q^2-1)(q-1)}.
\]
Cross-multiplying gives
\[
q(q^2-1)(q-1)=2(q-1)^2.
\]
If \(q\neq 1\), cancel \((q-1)\) to obtain
\[
q(q^2-1)=2(q-1)
\iff q(q-1)(q+1)=2(q-1)
\iff q(q+1)=2,
\]
which is impossible for any prime power \(q\ge 2\).
Thus Case 1 yields $n\leq 1$.

\medskip
\noindent\textbf{Case 2: \(\;|L^{(2)}(\mathcal C)|<\binom{m}{2}\).}
In this case we use the cover-uniformity identity (proved earlier):
\begin{equation}\label{eq:kformula}
k=\frac{|L^{(1)}(V)|}{|L^{(1)}(V)|-|\operatorname{Cov}(W)|+1},
\end{equation}
where \(W\) is any atom of \(L(V)\).
Substituting \eqref{eq:LV_atoms} and \eqref{eq:LV_cov} into \eqref{eq:kformula} gives
\[
k
=\frac{\frac{q^n-1}{q-1}}
{\frac{q^n-1}{q-1}-\frac{q^{n-1}-1}{q-1}+1}
=\frac{\frac{q^n-1}{q-1}}
{\frac{q^n-q^{n-1}+q-1}{q-1}}
=\frac{q^n-1}{q^n-q^{n-1}+q-1}.
\]
Factor the denominator:
\[
q^n-q^{n-1}+q-1
=q^{n-1}(q-1)+(q-1)
=(q-1)(q^{n-1}+1),
\]
so
\begin{equation}\label{eq:kfinal}
k=\frac{q^n-1}{(q-1)(q^{n-1}+1)}.
\end{equation}
Since \(k\in\mathbb Z\), we must have \(q^{n-1}+1\mid(q^n-1)\).
But
\[
q^n-1=q(q^{n-1}+1)-(q+1),
\]
so \(q^{n-1}+1\mid(q^n-1)\) implies \(q^{n-1}+1\mid(q+1)\).
If \(n\ge 3\) and \(q\ge 2\), then \(q^{n-1}+1\ge q^2+1>q+1\), a contradiction.
Hence \(n\le 2\).

If \(n=2\), then \eqref{eq:mLV} gives \(m=\frac{q^2-1}{q-1}=q+1\), and \eqref{eq:kfinal} gives
\[
k=\frac{q^2-1}{(q-1)(q+1)}=1.
\]
This completes the proof.
\end{proof}

\begin{theorem}\label{thm:LV-sufficient}
Let \(V\cong \mathbb F_q^n\). If $n=0$ or $n=1$, then the lattice of subspaces is trivially isomorphic to the lattice induced by a covering. Moreover, if \(n=2\), \(m=q+1\), and \(k=1\), then \(L(\mathcal C)\cong L(V)\).
\end{theorem}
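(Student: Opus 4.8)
The plan is to follow the template of Theorem~\ref{thm:Pn-sufficient}. For \(n=0\) the subspace lattice \(L(V)\) is the one-element lattice, and by Lemma~\ref{heights0,1}(1) this is realised by a covering exactly when \(U=\varnothing\); for \(n=1\) the subspace lattice is the two-element chain \(\{0\}<V\), and by Lemma~\ref{heights0,1}(2) this is realised exactly when \(\mathcal C=\{U\}\) with \(U\neq\varnothing\). Since any two lattices with the same number of levels \(\le 2\) are isomorphic (a one-level lattice is a point, a two-level lattice is a \(2\)-chain), the trivial cases are done.

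For the main case \(n=2\), the strategy is to show that both \(L(V)\) and \(L(\mathcal C)\) are forced to be the rank-two lattice \(M_{q+1}\): a bottom, \(q+1\) pairwise incomparable middle elements, and a top. For \(L(V)\) this is immediate, since the only subspaces of \(\mathbb F_q^2\) are \(\{0\}\), the \(q+1\) lines, and \(V\); in particular \(|L^{(1)}(V)|=\frac{q^2-1}{q-1}=q+1\). For \(L(\mathcal C)\) I would argue as follows. By the standing simplification, \(L^{(1)}(\mathcal C)=\{\{x\}:x\in U\}\), so \(\operatorname{cl}(\varnothing)=\varnothing\) and \(L(\mathcal C)\) has exactly \(m=q+1\) atoms. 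The hypothesis \(k=1\) places us in the regime of the uniform-lattice subsection (it excludes Case~1, where no element satisfies \(|\mathcal C(\{x\})|=2\) and \(\mathcal E\) would be empty), so \(\mathcal E\) partitions \(U\) and \(m=k\cdot|E(\{x\})|\); with \(k=1\) this forces \(E(\{x\})=U\), hence \(\operatorname{cl}(\{a,b\})=E(\{x\})=U\) for all distinct \(a,b\in U\). Since \(m\ge 3\) and the singletons are rank-one flats, some pair \(\{a,b\}\) is independent; then \(\operatorname{rank}_{M(\mathcal C)}(U)=\operatorname{rank}_{M(\mathcal C)}(\{a,b\})=2\), so \(N(L(\mathcal C))=3\). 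A finite lattice of exactly three levels is determined up to isomorphism by its number of atoms (the unique top covers every atom and there is nothing else), so \(L(\mathcal C)\cong M_{q+1}\cong L(V)\).

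I expect the main obstacle to be making the implication ``\(k=1\Rightarrow N(L(\mathcal C))=3\)'' fully rigorous: the facts about \(\mathcal E\) used above were established in the uniform-lattice subsection, and I should check that they are available here without circularity, i.e.\ that the numerics \(m=q+1\), \(k=1\) indeed force us into ``Case~2'' and that the rank-two flat is all of \(U\) rather than a proper subset. As a concrete check that the conditions are satisfiable, the covering \(\mathcal C=\{\,U\setminus\{u_1\},\,U\setminus\{u_2\}\,\}\) on a set \(U\) of size \(q+1\) (with \(u_1\neq u_2\)) has transversal matroid \(U_{2,q+1}\), whose flats are \(\varnothing\), the \(q+1\) singletons, and \(U\); this exhibits \(L(\mathcal C)\cong M_{q+1}\cong L(V)\) explicitly, and one reads off \(m=q+1\) and \(k=1\) from it.
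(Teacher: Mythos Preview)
Your proof is correct and follows essentially the same route as the paper: both reduce to the observation that a rank--\(2\) geometric lattice is determined up to isomorphism by its number of atoms, and both handle \(n\le 1\) via Lemma~\ref{heights0,1}. You actually supply more detail than the paper does---the paper simply asserts that ``if \(m=q+1\) and \(k=1\), then \(L(\mathcal C)\) is also rank--\(2\)'' and moves on, whereas you unpack this through \(E(\{x\})=U\) and \(\operatorname{cl}(\{a,b\})=U\).

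Your worry about circularity is understandable but not a real problem: in the sufficiency direction the hypothesis ``\(k=1\)'' is being taken as a structural assumption on the covering \(\mathcal C\) (namely that \(\mathcal E\) consists of the single block \(U\)), not as a numerical consequence of an isomorphism with a uniform lattice. With that reading, the facts you use about \(E(\{x\})\) and \(\operatorname{cl}(\{a,b\})\) follow directly from Lemma~4.4 and Corollary~4.5 and the definition of \(E(\{x\})\), none of which depend on the uniformity hypothesis. Your explicit example \(\mathcal C=\{U\setminus\{u_1\},\,U\setminus\{u_2\}\}\) realising \(U_{2,q+1}\) is a nice addition that the paper omits.
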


\begin{proof}
   All lattices of height  $0$ are isomorphic and all lattices of height  $1$  are isomorphic. Thus, if $n=0$, then $L(\mathcal{C})\cong L(V)$ if and only if $N(L(\mathcal{C}))=1$, which by Lemma~\ref{heights0,1}, is equivalent to $U = \emptyset$. Similarly, if $n=1$, then $L(\mathcal{C})\cong L(V)$ if and only if $\mathcal{C}=\{U\}$.

Now assume \(n=2\).
Then \(L(V)\) is a rank--\(2\) geometric lattice with
\[
|L^{(1)}(V)|=\frac{q^2-1}{q-1}=q+1
\]
atoms. If \(m=q+1\) and \(k=1\), then \(L(\mathcal C)\) is also rank--\(2\) with
\[
|L^{(1)}(\mathcal C)|=m=q+1
\]
atoms. A rank--\(2\) geometric lattice is uniquely determined (up to isomorphism) by its number of atoms, with each atom covering the minimum element and covered by the maximum element.
Hence \(L(\mathcal C)\cong L(V)\).
\end{proof}

Combining Theorems~\ref{thm:LV-necessary} and \ref{thm:LV-sufficient}, we obtain a complete classification of when $L(\mathcal{C})$ can be isomorphic to a lattice of subspaces $L(V)$.

As an example, for \(q=3\) we have \(m=q+1=4\) and \(n=2\).
Taking a covering with \(k=1\) (four singleton blocks) produces \(L(\mathcal C)\cong L(\mathbb F_3^2)\).

\begin{center}
\begin{tabular}{cc}
    \begin{tikzpicture}
        \node at (0,3.8) {\textbf{Lattice of Subspaces of \( V \)}};

        \node (top) at (0,3) {$\langle (1,0), (0,1) \rangle$};
        \node (11) at (-2,1.5) {$\langle (1,1) \rangle$};
        \node (10) at (-0.7,1.5) {$\langle (1,0) \rangle$};
        \node (01) at (0.7,1.5) {$\langle (0,1) \rangle$};
        \node (21) at (2,1.5) {$\langle (2,1) \rangle$};
        \node (00) at (0,0) {$\langle (0,0) \rangle$};

        \draw (top) -- (11);
        \draw (top) -- (10);
        \draw (top) -- (01);
        \draw (top) -- (21);
        \draw (11) -- (00);
        \draw (10) -- (00);
        \draw (01) -- (00);
        \draw (21) -- (00);
    \end{tikzpicture}
    &
    \begin{tikzpicture}
        \node at (0,3.8) {\textbf{Lattice Induced by Covering \( L(\mathcal{C}) \)}};

        \node (top) at (0,3) {$\{a,b,c,d\}$};
        \node (a) at (-2,1.5) {$\{a\}$};
        \node (b) at (-0.7,1.5) {$\{b\}$};
        \node (c) at (0.7,1.5) {$\{c\}$};
        \node (d) at (2,1.5) {$\{d\}$};
        \node (empty) at (0,0) {$\{\}$};

        \draw (top) -- (a);
        \draw (top) -- (b);
        \draw (top) -- (c);
        \draw (top) -- (d);
        \draw (a) -- (empty);
        \draw (b) -- (empty);
        \draw (c) -- (empty);
        \draw (d) -- (empty);
    \end{tikzpicture}
\end{tabular}
\end{center}

\subsection{Isomorphisms with the Dowling Lattice}

In this subsection we classify all cases in which a lattice induced by a covering is isomorphic to a Dowling lattice.
Let \(G\) be a finite group and let \(Q_n(G)\) denote the Dowling lattice.

It is well known that \(Q_n(G)\) is uniform \citep{Dowling1973}. In particular, we will use the following level counts (for \(n\ge 2\)):
\begin{equation}\label{eq:Q_atoms}
|Q_n^{(1)}(G)| \;=\; n+\binom{n}{2}\,|G|,
\end{equation}
\begin{equation}\label{eq:Q_rank2}
|Q_n^{(2)}(G)| \;=\; \binom{n}{2}+\frac12\,n(n-1)(n-2)\,|G|
+\frac{1}{24}\,n(n-1)(n-2)(3n-5)\,|G|^2,
\end{equation}
and, for any atom \(\pi\in Q_n(G)\),
\begin{equation}\label{eq:Q_cov}
|\operatorname{Cov}(\pi)| \;=\; \binom{n-1}{1}+\binom{n-1}{2}\,|G|.
\end{equation}

\begin{theorem}[Necessity]\label{thm:Qn-necessary}
If \(L(\mathcal C)\cong Q_n(G)\), then either \(n=0,1\) (the trivial cases), or \(n=2\).
In the nontrivial case \(n=2\), we must have
\[
m=|G|+2
\qquad\text{and}\qquad
k=1.
\]
\end{theorem}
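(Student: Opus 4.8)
The plan is to mimic the structure of Theorems~\ref{thm:Pn-necessary} and \ref{thm:LV-necessary}: use rank-preservation to pin down $m$ from the atom count, then split into the two cases $|L^{(2)}(\mathcal C)|=\binom m2$ and $|L^{(2)}(\mathcal C)|<\binom m2$, and in each case derive a Diophantine constraint forcing $n\le 2$. First I would invoke that a lattice isomorphism preserves levels, so comparing atom counts via \eqref{eq:Q_atoms} gives
\[
m=|L^{(1)}(\mathcal C)|=|Q_n^{(1)}(G)|=n+\binom n2\,|G|.
\]
Set $t=|G|$ throughout for brevity.

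\textbf{Case 1: $|L^{(2)}(\mathcal C)|=\binom m2$.} Here every pair of atoms has trivial closure, so $|L^{(2)}(\mathcal C)|=\binom m2$ must equal $|Q_n^{(2)}(G)|$ from \eqref{eq:Q_rank2}. Substituting $m=n+\binom n2 t$ and expanding $\binom m2=\tfrac12 m(m-1)$, I would compare with the right-hand side of \eqref{eq:Q_rank2}. The leading $t^2$-terms: $\binom m2$ has $t^2$-coefficient $\tfrac12\binom n2^2=\tfrac18 n^2(n-1)^2$, while $|Q_n^{(2)}(G)|$ has $t^2$-coefficient $\tfrac1{24}n(n-1)(n-2)(3n-5)$. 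For $n\ge 2$ these can only agree if $\tfrac18 n(n-1)=\tfrac1{24}(n-2)(3n-5)$, i.e. $3n(n-1)=(n-2)(3n-5)$, which (as in Case 1 of Theorem~\ref{thm:Pn-necessary}) gives $n=2$; but then one still must check the lower-order terms in $t$ are consistent, and at $n=2$ the formula \eqref{eq:Q_rank2} degenerates since the $(n-2)$ factors vanish, leaving $|Q_2^{(2)}(G)|=1=\binom m2$ only if $m=2$, forcing $t=0$ — impossible for a group. So Case~1 yields only the trivial $n\le1$. I would handle $n=0,1$ separately via Lemma~\ref{heights0,1}: $n=0$ corresponds to $U=\emptyset$ and $n=1$ to $\mathcal C=\{U\}$ (since $|Q_0(G)|$ has one level and $Q_1(G)$ has two).

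\textbf{Case 2: $|L^{(2)}(\mathcal C)|<\binom m2$.} Now I use the cover-uniformity identity $k=\frac{|\mathcal L^{(1)}|}{|\mathcal L^{(1)}|-|\operatorname{Cov}(\pi)|+1}$ established before Theorem~\ref{counting}. With $\mathcal L=Q_n(G)$, $|\mathcal L^{(1)}|=n+\binom n2 t$ and $|\operatorname{Cov}(\pi)|=(n-1)+\binom{n-1}2 t$ by \eqref{eq:Q_cov}, the denominator simplifies:
\[
|\mathcal L^{(1)}|-|\operatorname{Cov}(\pi)|+1
= \Bigl(n-(n-1)+1\Bigr)+\Bigl(\tbinom n2-\tbinom{n-1}2\Bigr)t
= 2+(n-1)t,
\]
using $\binom n2-\binom{n-1}2=n-1$. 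Hence
\[
k=\frac{n+\binom n2 t}{\,2+(n-1)t\,}.
\]
Since $k\in\mathbb Z_{\ge1}$, the key step is to show $2+(n-1)t\mid n+\binom n2 t$ forces $n\le2$. Multiplying the numerator by $2$ and reducing mod $2+(n-1)t$: $2n+n(n-1)t = n\bigl(2+(n-1)t\bigr)$, so $2+(n-1)t\mid 2n - n\cdot 0 = 0$ trivially after that substitution — more carefully, $2\bigl(n+\binom n2 t\bigr)=2n+n(n-1)t=n\bigl(2+(n-1)t\bigr)$, so $2+(n-1)t\mid 2k\cdot 1$ is automatic; the real divisibility to exploit is that $k=\frac{2n+n(n-1)t}{2(2+(n-1)t)}$ must be an integer, i.e. $2\bigl(2+(n-1)t\bigr)\mid n\bigl(2+(n-1)t\bigr)$, which holds iff $2\mid n$ OR $2+(n-1)t$ is even, and additionally $k\ge1$ requires $n+\binom n2 t\ge 2+(n-1)t$. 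I would instead argue directly: $k=\frac{n+\binom n2 t}{2+(n-1)t}$; for $n\ge3$ and $t\ge1$ we have $n+\binom n2 t < (n-1)\bigl(2+(n-1)t\bigr)$ is false in general, so I bound $k$ between consecutive integers. Write $k(2+(n-1)t)=n+\binom n2 t$; comparing, $k=\frac{\binom n2}{n-1}+O(1/t)=\frac n2+O(1/t)$ for large $t$, but $k$ must be a fixed integer independent of the relation — actually $t=|G|$ is determined, not free. The cleanest route: rearrange to $2k + k(n-1)t = n + \tfrac{n(n-1)}2 t$, so $\bigl(\tfrac{n(n-1)}2-k(n-1)\bigr)t = 2k-n$, i.e. $(n-1)\bigl(\tfrac n2-k\bigr)t = 2k-n$, giving $(n-1)\bigl(\tfrac n2-k\bigr)t=-(n-2k)=2k-n$, hence $(n-1)t\cdot\tfrac{n-2k}{2}=-(n-2k)$, so either $n=2k$ (then $2k-n=0$ forces the LHS zero, consistent, and $m=2k+\binom{2k}2 t$, $k=\frac{2k+\binom{2k}2 t}{2+(2k-1)t}$ — solve to find $k=1,n=2$), or $n\ne 2k$ and $(n-1)t=-2$, impossible for $n\ge2,t\ge1$. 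Thus $n=2k$ and then $n=2$, $k=1$.

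Finally, with $n=2$: \eqref{eq:Q_atoms} gives $m=2+\binom22 t=2+t=|G|+2$, and substituting $n=2$ into $k=\frac{n+\binom n2 t}{2+(n-1)t}=\frac{2+t}{2+t}=1$. This establishes the stated necessary conditions. The main obstacle I anticipate is the Case~2 divisibility/integrality argument: the algebra relating $k$, $n$, and $t=|G|$ is a single equation in three quantities, and one must be careful that $t$ is a \emph{fixed} positive integer (the group order) rather than a free parameter, so the argument is really "for which $(n,t)$ does the resulting $k$ come out a positive integer" — the rearrangement $(n-1)(n-2k)t = 2(2k-n)$ making the two sides have opposite signs unless both vanish is the crux that kills $n\ge3$. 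A secondary subtlety is confirming that at $n=2$ the Dowling rank-$2$ formula \eqref{eq:Q_rank2} (which has $(n-2)$ factors) is not needed for Case~2 — we only used the cover count \eqref{eq:Q_cov}, which at $n=2$ reads $|\operatorname{Cov}(\pi)|=1$, consistent with a rank-$2$ lattice.
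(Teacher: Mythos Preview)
Your Case~2 argument has a genuine gap. You correctly compute the denominator $2+(n-1)t$ and arrive (after some meandering) at the relation $(2k-n)\bigl((n-1)t+2\bigr)=0$, forcing $k=n/2$. This is right, and in fact the paper obtains it in one line by noticing the numerator factors as $n+\binom n2 t=\tfrac n2\bigl(2+(n-1)t\bigr)$. But you then assert ``solve to find $k=1$, $n=2$'' --- there is nothing left to solve: substituting $n=2k$ back into $k=\frac{n+\binom n2 t}{2+(n-1)t}$ is an identity for \emph{every} $k\ge 1$. The constraint $k=n/2$ by itself allows $n=4,k=2$, $n=6,k=3$, etc. What actually pins down $n=2$ is a \emph{second} equation you never invoke: Theorem~\ref{counting} equates $|Q_n^{(2)}(G)|$ with the explicit formula $\binom{m}{2}-\bigl(\binom{2+(n-1)t}{2}-1\bigr)\cdot\tfrac n2$. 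The paper computes both sides --- the left as $\binom{m}{2}-\tfrac1{12}n(n-1)t(4nt+9-5t)$ and the right as $\binom{m}{2}-\tfrac14 n(n-1)t(3+(n-1)t)$ --- and equating them collapses to $(n-2)t=0$, hence $n=2$. You used only the cover-count identity, not the rank-$2$ count; both are needed here, unlike the subspace-lattice case where divisibility alone sufficed.

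Your Case~1 also has a smaller logical slip: you compare the $t^2$-coefficients of $\binom m2$ and $|Q_n^{(2)}(G)|$ as if $t$ were a free variable, but $t=|G|$ is a fixed integer, so matching a single value does not require matching coefficients. The paper instead computes the full difference $\binom{m}{2}-|Q_n^{(2)}(G)|=\tfrac1{12}n(n-1)t\bigl(t(4n-5)+9\bigr)$, which is manifestly positive for all $n\ge2$, $t\ge1$, ruling out Case~1 directly. Your conclusion there is correct, but the route needs this adjustment.
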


\begin{proof}
Since lattice isomorphisms preserve levels, the number of atoms agrees. Hence, by \eqref{eq:Q_atoms},
\begin{equation}\label{eq:mQ}
m=|L^{(1)}(\mathcal C)|=|Q_n^{(1)}(G)|=n+\binom{n}{2}|G|.
\end{equation}

\medskip
\noindent\textbf{Case 1: \(\;|L^{(2)}(\mathcal C)|=\binom{m}{2}\).}
Then \(|Q_n^{(2)}(G)|=|L^{(2)}(\mathcal C)|=\binom{m}{2}\), so
\[
\binom{|Q_n^{(1)}(G)|}{2}-|Q_n^{(2)}(G)|=0.
\]
Using \eqref{eq:Q_atoms} and \eqref{eq:Q_rank2} and expanding gives
\[
\binom{|Q_n^{(1)}(G)|}{2}-|Q_n^{(2)}(G)|
=\frac{1}{12}\,n(n-1)\,|G|\bigl(4n|G|+9-5|G|\bigr).
\]
Thus \(4n|G|+9-5|G|=0\), which has no solution in integers for \(n\ge 2\) and \(|G|\ge 1\).
Hence this case is impossible.

\medskip
\noindent\textbf{Case 2: \(\;|L^{(2)}(\mathcal C)|<\binom{m}{2}\).}
In this case we use the cover-uniformity identity (proved earlier):
\begin{equation}\label{eq:kformulaQ}
k=\frac{|Q_n^{(1)}(G)|}{|Q_n^{(1)}(G)|-|\operatorname{Cov}(\pi)|+1},
\end{equation}
where \(\pi\) is any atom of \(Q_n(G)\).
Substituting \eqref{eq:Q_atoms} and \eqref{eq:Q_cov} into \eqref{eq:kformulaQ} yields
\[
|Q_n^{(1)}(G)|-|\operatorname{Cov}(\pi)|+1
=2+(n-1)|G|,
\qquad
k=\frac{n+\binom{n}{2}|G|}{2+(n-1)|G|}=\frac{n}{2}.
\]
Moreover, by Theorem~\ref{counting} we also have
\begin{equation}\label{eq:Q_thm49}
\begin{aligned}
&\binom{|Q_n^{(1)}(G)|}{2}-|Q_n^{(2)}(G)|
\\
&=\left(\binom{|Q_n^{(1)}(G)|-|\operatorname{Cov}(\pi)|+1}{2}-1\right)
\frac{|Q_n^{(1)}(G)|}{|Q_n^{(1)}(G)|-|\operatorname{Cov}(\pi)|+1}.
\end{aligned}
\end{equation}

Using \( |Q_n^{(1)}(G)|-|\operatorname{Cov}(\pi)|+1 = 2+(n-1)|G|\) and \(\frac{|Q_n^{(1)}(G)|}{|Q_n^{(1)}(G)|-|\operatorname{Cov}(\pi)|+1}=\frac n2\), the right-hand side of \eqref{eq:Q_thm49} becomes
\[
\left(\binom{2+(n-1)|G|}{2}-1\right)\frac n2
=\frac14\,n(n-1)\,|G|\bigl(3+(n-1)|G|\bigr).
\]
Equating this with the expansion from Case 1 gives
\[
\frac{1}{12}\,n(n-1)\,|G|\bigl(4n|G|+9-5|G|\bigr)
=\frac14\,n(n-1)\,|G|\bigl(3+(n-1)|G|\bigr),
\]
which simplifies to \(n|G|-2|G|=0\), hence \(n=2\).
Then \(k=\frac n2=1\), and \eqref{eq:mQ} gives \(m=2+\binom22|G|=|G|+2\).
\end{proof}

\begin{theorem}[Sufficiency]\label{thm:Qn-sufficient}
Let \(G\) be a finite group.
\begin{itemize}
\item If $n=0$ or $n=1$, then the Dowling lattice is trivially isomorphic to the lattice induced by a covering.
\item If \(n=2\), \(m=|G|+2\), and \(k=1\), then \(L(\mathcal C)\cong Q_2(G)\).
\end{itemize}
\end{theorem}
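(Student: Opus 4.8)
The plan is to mirror the sufficiency argument already used for subspace lattices (Theorem~\ref{thm:LV-sufficient}): dispatch the two trivial ranks via Lemma~\ref{heights0,1}, and treat $n=2$ by recognizing both sides as \emph{the} rank--$2$ lattice with a prescribed number of atoms.

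First I would handle $n=0$ and $n=1$. Every lattice of height $0$ is trivially isomorphic to every other, and every lattice of height $1$ is a two--element chain, so it suffices to exhibit one covering realizing each height. For $n=0$, $Q_0(G)$ is the one--element lattice; by Lemma~\ref{heights0,1}(1) the covering of $U=\varnothing$ gives $L(\mathcal C)$ with $|N(L(\mathcal C))|=1$, hence $L(\mathcal C)\cong Q_0(G)$. For $n=1$, \eqref{eq:Q_atoms} gives $|Q_1^{(1)}(G)|=1+\binom12|G|=1$, so $Q_1(G)$ is the two--element chain; by Lemma~\ref{heights0,1}(2) the covering $\mathcal C=\{U\}$ of a nonempty set $U$ yields exactly this lattice.

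For the main case $n=2$, I would first translate the hypotheses into intrinsic statements about $\mathcal C$. By the standing simplification assumption, $L^{(1)}(\mathcal C)=\{\{x\}:x\in U\}$, so $L(\mathcal C)$ has exactly $m=|G|+2$ atoms. The hypothesis $k=1$ means the partition $\mathcal E$ of $U$ from the previous subsection is the trivial partition $\{U\}$; equivalently, using Theorem~\ref{matching2} and the discussion preceding Theorem~\ref{counting}, $\operatorname{cl}(\{a,b\})=U$ for every pair of distinct $a,b\in U$. Since $m=|G|+2\ge 2$, the join of two distinct atoms is $U$, which strictly dominates every atom, so $L(\mathcal C)$ has rank exactly $2$ and its only elements are $\varnothing$, the $m$ singleton atoms, and $U$. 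On the other hand, $Q_2(G)$ has rank $2$ and, by \eqref{eq:Q_atoms}, $|Q_2^{(1)}(G)|=2+\binom22|G|=|G|+2$ atoms; in a rank--$2$ lattice the only elements are the bottom, the atoms, and the top. Hence both $L(\mathcal C)$ and $Q_2(G)$ are the rank--$2$ lattice on $|G|+2$ atoms, and the elementary fact that such a lattice is determined up to isomorphism by its number of atoms gives $L(\mathcal C)\cong Q_2(G)$.

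I expect the only step needing genuine care to be the translation ``$k=1$'' $\Longrightarrow$ ``$\operatorname{cl}(\{a,b\})=U$ for all distinct $a,b$''; everything afterward is the same boilerplate as in the subspace case. It may also be worth remarking that this case is nonvacuous: for a set $U$ with $|U|=|G|+2$ and any fixed $u_0\in U$, the covering $\mathcal C=\{U,\,U\setminus\{u_0\}\}$ satisfies the simplification condition, has $\mathcal E=\{U\}$ (so $k=1$), and realizes $L(\mathcal C)\cong Q_2(G)$, since $M(\mathcal C)$ has rank $2$ with every $2$--subset independent.
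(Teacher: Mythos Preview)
Your proof is correct and follows essentially the same strategy as the paper: dispatch $n=0,1$ via Lemma~\ref{heights0,1}, and for $n=2$ observe that both lattices are rank--$2$ geometric lattices with $|G|+2$ atoms, hence isomorphic. Your treatment is in fact slightly more careful than the paper's, since you spell out why $k=1$ forces $\operatorname{cl}(\{a,b\})=U$ (and hence rank $2$), and you add a concrete nonvacuousness example---both welcome additions.
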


\begin{proof}
 All lattices of height  $0$ are isomorphic and all lattices of height  $1$  are isomorphic. Thus, $L(\mathcal{C})\cong Q_0(G)$ if and only if $N(L(\mathcal{C}))=1$, which by Lemma~\ref{heights0,1}, is equivalent to $U = \emptyset$. Similarly, if $n=1$, then $L(\mathcal{C})\cong Q_1(G)$ if and only if $\mathcal{C}=\{U\}$.

Now assume \(n=2\), \(m=|G|+2\), and \(k=1\).
Then \(Q_2(G)\) is a rank--\(2\) geometric lattice with
\[
|Q_2^{(1)}(G)|=2+|G|
\]
atoms. Under \(k=1\), the lattice \(L(\mathcal C)\) is also rank--\(2\) with
\[
|L^{(1)}(\mathcal C)|=m=|G|+2
\]
atoms. A rank--\(2\) geometric lattice is uniquely determined (up to isomorphism) by its number of atoms, hence \(L(\mathcal C)\cong Q_2(G)\).
\end{proof}

Combining Theorems~\ref{thm:Qn-necessary} and \ref{thm:Qn-sufficient} yields the complete classification of when a covering-induced lattice \(L(\mathcal C)\) can be isomorphic to a Dowling lattice \(Q_n(G)\).

    We proceed to give an example where $n=2, G = \{e,g\}$, and $\mathcal{C}=\{\{a,b,d\},\{a,c,d\}\}$.

    \begin{center}
    \begin{tabular}{cc} 
       \begin{tikzpicture}
    \node at (0,3.8) {\textbf{Dowling Lattice \( Q_2(G) \), \(G=\{e,g\}\)}};

    \node (top) at (0,3) {$0\,|\,12$};

    \node (z1) at (-3,1.5) {$0\,|\,2$};          
    \node (m_e) at (-1,1.5) {$12^{\,e}$};        
    \node (m_g) at (1,1.5) {$12^{\,g}$};         
    \node (z2) at (3,1.5) {$0\,|\,1$};           

    \node (bot) at (0,0) {$1\,|\,2$};

    \draw (top) -- (z1);
    \draw (top) -- (m_e);
    \draw (top) -- (m_g);
    \draw (top) -- (z2);

    \draw (z1) -- (bot);
    \draw (m_e) -- (bot);
    \draw (m_g) -- (bot);
    \draw (z2) -- (bot);
\end{tikzpicture}
        &
        \begin{tikzpicture}
            \node at (0,3.8) {\textbf{Lattice Induced by Covering \( L(\mathcal{C}) \)}};

            \node (top) at (0,3) {$\{a,b,c,d\}$};
            \node (a) at (-2,1.5) {$\{a\}$};
            \node (b) at (-0.7,1.5) {$\{b\}$};
            \node (c) at (0.7,1.5) {$\{c\}$};
            \node (d) at (2,1.5) {$\{d\}$};
            \node (empty) at (0,0) {$\{\}$};

            \draw (top) -- (a);
            \draw (top) -- (b);
            \draw (top) -- (c);
            \draw (top) -- (d);
            \draw (a) -- (empty);
            \draw (b) -- (empty);
            \draw (c) -- (empty);
            \draw (d) -- (empty);
        \end{tikzpicture}
    \end{tabular}
\end{center}

 \bibliographystyle{apalike}
\bibliography{references}
\end{document}